\documentclass[12pt]{amsart}
\usepackage{amsfonts}
\usepackage{amsfonts,latexsym,rawfonts,amsmath,amssymb,amsthm}
\usepackage{amsmath,amssymb,amsthm,amscd,esint}
\usepackage[plainpages=false]{hyperref}
\usepackage{appendix}

\usepackage{graphicx}

\RequirePackage{color}

 \textwidth = 6.25 in
 \textheight = 8.25 in
 \oddsidemargin = 0.20 in
 \evensidemargin = 0.20 in

\voffset=-30pt

%\TagsOnRight
%\newcommand{TagsOnRight}

 \pagestyle{plain}
\numberwithin{equation}{section}

\newcommand{\beq}{\begin{equation}}
\newcommand{\eeq}{\end{equation}}
\newcommand{\beqs}{\begin{eqnarray*}}
\newcommand{\eeqs}{\end{eqnarray*}}
\newcommand{\beqn}{\begin{eqnarray}}
\newcommand{\eeqn}{\end{eqnarray}}
\newcommand{\beqa}{\begin{array}}
\newcommand{\eeqa}{\end{array}}

  \newcommand{\cE}{{\mathcal E}}
  \newcommand{\F}{{\mathcal F}}

\newcommand{\C}{{\mathbb C}}
\newcommand{\R}{{\mathbb R}}

\newcommand{\D}{\nabla}

\newcommand{\p}{\partial}
\newcommand{\eps}{\varepsilon}
\newcommand{\lam}{\lambda}
\newcommand{\Om}{\Omega}
\newcommand{\pom}{{\p\Om}}

\newcommand{\tr}{\triangle}
\def\noo{\noindent}

\newtheorem{prop}{Proposition}[section]
\newtheorem{theo}[prop]{Theorem}
\newtheorem{lem}[prop]{Lemma}

\newtheorem{rem}[prop]{Remark}

%\subjclass[2000]{42B35; 46E30; 47B38; 30H25}
%\keywords{$K$-energy; Compositions; Q-spaces}

\allowdisplaybreaks
\arraycolsep=1pt

\title{Moser-Trudinger  inequality  for  the complex Monge-Amp\`ere equation}
\author{Jiaxiang Wang Xu-jia Wang and Bin  $\text{Zhou}^*$}

\address{Jiaxiang Wang: 
School of Mathematical Sciences, Zhejiang University, Hangzhou 310027, China.}
\email{wangjx\underline{ }manifold@126.com}

\address{Xu-jia Wang: 
Centre for Mathematics and Its Applications,
  The Australian National University,
  Canberra, ACT 2601.}
\email{Xu-Jia.Wang@anu.edu.au}

\address{Bin Zhou:
School of Mathematical Sciences, Peking
University, Beijing 100871, China.}
\email{bzhou@pku.edu.cn}

\thanks {*This research is partially supported by ARC DP 170100929 and NSFC 11571018 and 11822101.}

\subjclass[2000]{Primary: 32W20; Secondary: 35J60.}

\keywords{Complex Monge-Amp\`ere equation; Moser-Trudinger inequality; regularity.}

%\thanks {}

\begin{document}
 \maketitle

\begin{abstract}
In this paper, we prove a Moser-Trudinger type inequality for pluri-subharmonic functions vanishing on the boundary.
Our proof uses a descent gradient flow for the complex Monge-Amp\`ere functional. 
\end{abstract}

%\bibliographystyle{plain}
%\tableofcontents

\baselineskip=16.4pt
\parskip=3pt
\section{Introduction}

Moser-Trudinger and Sobolev type inequalities
are widely used in the study of partial differential equations and geometric problems.  
The classical Moser-Trudinger inequality in the real two dimensional space was obtained by Trudinger \cite{Tr} and Moser \cite{M}. 
Let $\Omega$ be  a bounded domain in $\mathbb R^2$. Then one has
$$\int_\Omega e^{4\pi\frac{- u}{\|\nabla u\|_{L^2(\Omega)}}}\,dx\leq C\ \ \ \forall\ u\in W^{1,2}_0(\Omega) , $$
where  $C>0$ is a constant depending only on the diameter of the domain.
It is natural to ask whether this inequality can be extended to the complex setting in high dimensions. 

Let $\Omega$ be a bounded, smooth pseudo-convex domain in $\mathbb{C}^n$.
Denote by $\mathcal{PSH}(\Omega)$
the set of pluri-subharmonic functions 
and by $\mathcal{PSH}_0(\Omega)$ the set of functions in $\mathcal{PSH}(\Omega)$ 
which vanish on $\partial \Omega$. For $u\in \mathcal{PSH}_0(\Omega)\cap C^{\infty}(\bar \Omega)$, let 
\beq
\mathcal E(u)=\frac{1}{n+1}\int_{\Omega}(-u)(dd^cu)^n
\eeq
be the Monge-Amp\`ere energy.
Denote
\beq
\|u\|_{\mathcal{PSH}_0(\Omega)}=[\mathcal E(u)]^{\frac{1}{n+1}},
\eeq
which is a semi-norm in the set $\mathcal{PSH}_0(\Omega)$ \cite {W1}.
The main result of the paper is the following Moser-Trudinger type  inequality. 

\begin{theo} \label{mt}
There exist positive constants $\alpha, C$, depending only on $n$  and $\text{diam}(\Omega)$, 
	such that  $\forall\ u\in \mathcal{PSH}_0(\Omega)\cap C^{\infty}(\bar \Omega)$, $ u\not\equiv 0$,
	we have the inequality
	\beq\label{MT0}
	\int_\Omega e^{ \alpha \left(\frac{- u}{\|u\|_{\mathcal{PSH}_0(\Omega)}}\right)^{\frac{n+1}{n}}} \leq C . 
	\eeq
\end{theo}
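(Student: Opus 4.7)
My first move would be to exploit the scale invariance of the inequality. Since $\mathcal{E}$ is homogeneous of degree $n+1$, the quotient $-u/\|u\|_{\mathcal{PSH}_0(\Omega)}$ is invariant under $u\mapsto\lambda u$, so after replacing $u$ by $u/\|u\|_{\mathcal{PSH}_0(\Omega)}$ the problem becomes: prove a uniform bound $\int_\Omega e^{\alpha(-u)^{(n+1)/n}}\le C$ over all smooth $u\in\mathcal{PSH}_0(\Omega)$ with $\mathcal{E}(u)=1$. By the layer-cake formula this is equivalent to the sublevel-set decay estimate
$$|\{u<-s\}|\le C_1\,e^{-\alpha_0 s^{(n+1)/n}}\qquad\text{for every } s\ge 0,$$
since then the desired integral converges for any $\alpha<\alpha_0$.

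\textbf{Pluripotential route.} The classical way to obtain such a distribution estimate is a two-step pluripotential argument. Step one is a Chebyshev-type inequality coupling energy to the relative Monge-Amp\`ere capacity, $s^{n+1}\,\operatorname{cap}(\{u<-s\})\le C_n\,\mathcal{E}(u)$, which one proves by testing admissible competitors $v$ with $-1\le v\le 0$ against the rescaling $u/s$ via the comparison principle. Step two is Kolodziej's volume-capacity bound $|K|\le C\exp(-\beta\operatorname{cap}(K)^{-1/n})$. Composing these two inequalities produces precisely the sublevel decay above with the correct exponent $(n+1)/n$, and hence \eqref{MT0}.

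\textbf{The flow approach flagged in the abstract.} A sharper, variational attack, which the abstract advertises, is to deform $u$ along a descent gradient flow for $\mathcal{E}$. Schematically one sets up a parabolic complex Monge-Amp\`ere equation of the form
$$\dot u_t\,(dd^c u_t)^n=-\tfrac{\delta\mathcal{E}}{\delta u_t}+(\text{constraint term}),$$
chosen so that $\mathcal{E}(u_t)$ is nonincreasing while $\int_\Omega e^{\alpha(-u_t)^{(n+1)/n}}$ is controlled or preserved along the flow. Running this flow from arbitrary initial data should drive $u$ toward a canonical extremizer of the Euler-Lagrange Monge-Amp\`ere equation $(dd^c u)^n=\mu\,(-u)^{1/n}e^{\alpha(-u)^{(n+1)/n}}\omega^n$, for which the inequality can be verified directly (either by an explicit radial calculation after reducing to a ball of comparable diameter, or by a continuity method). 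Monotonicity of $\mathcal{E}$ along the flow then transports the inequality back to the original $u$.

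\textbf{Main obstacle.} The principal difficulty I foresee is entirely analytic: proving long-time existence, uniform regularity, and convergence of the flow for a degenerate fully nonlinear Monge-Amp\`ere operator, while keeping plurisubharmonicity and the boundary condition $u|_{\partial\Omega}=0$ stable. Pinning down the sharp exponent $(n+1)/n$ rather than a weaker power hinges on explicit control of the constants in the Chebyshev-capacity and Kolodziej volume-capacity steps, or, in the flow framework, on precise concentration estimates for a maximizing sequence. A concentration-compactness alternative would be my fallback for isolating the borderline case and producing the eventual constants $\alpha$ and $C$ depending only on $n$ and $\operatorname{diam}(\Omega)$.
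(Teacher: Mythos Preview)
Your pluripotential route is correct and well known: the energy--capacity Chebyshev inequality combined with Ko\l odziej's volume--capacity estimate does yield the sublevel decay $|\{u<-s\}|\le Ce^{-\alpha_0 s^{(n+1)/n}}$, and hence \eqref{MT0}. However, this is precisely the approach the paper sets out to \emph{avoid}: its stated purpose is a proof using only classical PDE techniques, bypassing pluripotential theory and in particular Ko\l odziej's capacity machinery (see the Introduction and Remark~\ref{proof2}). So while your first route is valid, it is the route of \cite{BB}, not of this paper.

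Your flow sketch, by contrast, misses the actual structure of the argument. The paper does \emph{not} run a descent flow for $\mathcal{E}$ toward an extremizer and then transport the inequality back. Instead it (i) truncates the exponential, setting $F_m(t)=\sum_{j=n}^m \frac{\alpha^j}{j!}t^{j(n+1)/n}$, so that $Y_m(\Omega)=\sup_u\int_\Omega F_m(-u/\|u\|)$ is finite for each fixed $m$ by the Sobolev inequality of Section~2; (ii) runs a gradient flow to \emph{maximize} a regularized version of this truncated functional, producing a smooth maximizer $u_{m,\delta}$ with $\|u_{m,\delta}\|=1$ solving $\det(u_{i\bar j})=\lambda f_m^\delta(-u)=:g_{m,\delta}$; (iii) checks that $g_{m,\delta}\in L^1(\log L)^q$ with $q=\frac{n}{n+1}$, uniformly in $m,\delta$; and (iv) invokes the generalized Brezis--Merle inequality (Theorem~\ref{BMq}) with this $q$, which yields exactly $\beta=\frac{n}{n-q}=\frac{n+1}{n}$, to bound $\int_\Omega e^{\alpha(-u_{m,\delta})^{(n+1)/n}}$ uniformly.

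The key mechanism you are missing is (iii)--(iv): the critical exponent $\frac{n+1}{n}$ comes not from a capacity estimate but from matching the Orlicz integrability of the Monge--Amp\`ere right-hand side to the iterated Brezis--Merle bound of Theorem~\ref{BMq}, itself proved by an Alexandrov-maximum-principle bootstrap. The flow is only a device to manufacture an extremal whose equation has controlled $L^1(\log L)^{n/(n+1)}$ data; your Euler--Lagrange equation $(dd^cu)^n=\mu(-u)^{1/n}e^{\alpha(-u)^{(n+1)/n}}\omega^n$ is morally the right target, but without the truncation/approximation and the $L^1(\log L)^q$ observation you have no mechanism to bound its solutions, and ``explicit radial calculation after reducing to a ball'' is not available here since the moving-plane symmetry argument fails for the complex Monge--Amp\`ere operator (as the paper itself notes).
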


The Moser-Trudinger type  inequality in the complex space was conjectured by Aubin \cite{Au}
more than three decades ago. 
It has been studied by many authors using various different methods. 
See \cite{Ce, GKY}  for the inequality in complex space and \cite {TZ, PSS, Be} on K\"ahler manifolds. 
But the inequalities obtained in the above mentioned papers are only weak form of the conjectured one.
In this paper we prove the above stronger inequality \eqref{MT0}.
We point out that inequality \eqref{MT0} was also obtained in \cite {BB} many years ago.
The proof in \cite{BB} uses the pluripotential theory and thermodynamical formalism and is complicated.
In this paper we provide a proof based on classical PDE techniques only,
by employing a descent gradient flow method for the associated functional.

Our motivation is to establish the a priori estimates for the complex Monge-Amp\`ere equation
by the classical PDE techniques, and thereby answering a question raised  in \cite{B, BGZ, L}. 
Using the inequality \eqref{MT0},   we prove in a subsequent paper \cite{WWZ}
the uniform estimate, the stability, and  the H\"older continuity of solutions to the complex Monge-Amp\`ere equation 
by traditional PDE techniques.  Inequality \eqref{MT0} will play a key role in our treatment.

The organization of this paper is as follows. 
In Section 2, 
we prove a Sobolev type inequality for the complex Monge-Amp\`ere equation. To prove this inequality, 
we use the gradient flow method to prove that the Sobolev constant is monotonic with respect to domains, 
as in \cite{W1} for $k$-Hessian equations.
However there is an essential difference between our proof here and that in \cite{W1}.
In \cite{W1} the moving plane method is used to prove that 
the solution is rotationally symmetric when the domain is a ball. 
But this technique does not apply to the complex Monge-Amp\`ere equation.
To avoid this obstacle, 
we combine the gradient flow with an induction on the dimension via the Brezis-Merle type inequality,
as described in Step 1 to Step 4 in Section 2. The use of Brezis-Merle type inequality was inspired by \cite{BB}.
In Remark \ref{proof2} we point out that there is a simple proof for the Sobolev type inequality 
if one uses an estimate in \cite{K98}, which was known to the authors many years ago. 
In Sectin 3, we establish a generalized Brezis-Merle type inequality (Theorem \ref{BMq}).
In Sections 4, we prove the Moser-Trudinger type inequality \eqref{MT} by the using Theorem \eqref{BMq} and another gradient flow.

\newpage

\section{Sobolev type inequalities}

In this section, we drive a Sobolev type inequality \eqref{up} for the complex Monge-Amp\`ere equation.
Let us first recall the existence and regularity of solutions
to the associated parabolic Monge-Amp\`ere equation.

Let $\Omega\subset \C^n$ be a bounded, strictly pseudo-convex domain with smooth boundary $\p \Omega$. 
Denote $Q_T:=\Omega\times [0,T)$, 
$Q:=\Omega\times (0,\infty)$, $\Gamma_T:=\p Q_T\setminus \Omega\times\{t=T\}$ and $\Gamma:=\p Q$. 
Consider the Dirichlet problem 
\begin{align}\label{para-app}
\begin{cases}
u_t-\log \det(u_{i\bar{j}})=g(z,t,u),\ \ \ &\text{in\ $Q_T$},   \\
u=\varphi,\ \ \                              &\text{on\ $\p Q_T$},
\end{cases}
\end{align}
where $g\in C^2(\bar{Q}_T\times \R)$,
$\varphi\in C^{4,3}(\bar{Q}_T)$, $\varphi(z,0)$ is pluri-subharmonic,  and satisfies the compatibility condition
\beq\label{phit}
\varphi_t-\log\det(\varphi_{i\bar{j}})=g(z,t,\varphi)\ \ \text{on}\  \p \Omega\times \{t=0\}. 
\eeq
We also assume that there exists $C_1>0$ such that 
\beq\label{gzts}
g(z, t, s)\geq -C_1\ \ \forall\ (z, t)\in Q_T,  \ s\in\R.
\eeq
Note that \eqref{para-app} is parabolic with respect to $u$ if for any given $t>0$, $u(\cdot,t)$ is pluri-subharmonic.

\begin{theo}\label{para-estimate}
Let $u(z, t)$ be a solution to \eqref{para-app}.
Then under the above conditions, we have the estimate
\beq\label{HL0}
\|u\|_{C^{2,1}(\bar Q_T)}\leq C,
\eeq
where $C$ depends only on $\Omega$, $\|\varphi\|_{C^{4,2}(\bar{Q}_T)}$, $C_1$ and $g$ up to its second derivatives. 
\end{theo}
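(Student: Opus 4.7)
The plan is to prove the estimate by the classical parabolic Monge-Amp\`ere hierarchy: first bound $u$ and $u_t$, then $|\nabla u|$, then the second spatial derivatives, at each stage handling the boundary separately from the interior via the maximum principle and barrier constructions. Throughout, the linearized operator $\cL:=\p_t - u^{i\bar j}\p_{i\bar j}$ is parabolic on any pluri-subharmonic solution, so one can freely apply the parabolic maximum principle.

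First I would establish the $C^0$ bound. The pluri-subharmonicity of $u(\cdot,t)$ together with $u=\varphi$ on $\Gamma_T$ and the spatial maximum principle give $u\le \max_{\partial\Omega\times[0,T]}\varphi$. For the lower bound I would use the hypothesis $g\ge -C_1$ to construct an explicit subsolution of the form $\underline u(z,t)=A(|z|^2-R^2)+Bt+\inf\varphi$ with $A,B,R$ large depending only on $C_1$, $\Omega$, and $\varphi$, and then compare $\underline u$ with $u$. Next, differentiating the equation in $t$ yields $\cL u_t=g_t+g_u u_t$, so the compatibility condition \eqref{phit} pins $u_t$ on $\Gamma_T$ in terms of $\varphi$, and the parabolic maximum principle applied to $u_t e^{-Kt}$ gives $\|u_t\|_\infty\le C$. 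Consequently $\log\det(u_{i\bar j})$ is bounded from below, a fact that is crucial in later steps.

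The gradient estimate has the usual two pieces. On $\Gamma_T$, the tangential derivatives are controlled by $\|\varphi\|_{C^1}$, while the normal derivative is bounded by inserting upper and lower barriers built from a strictly pluri-subharmonic defining function $\rho$ of $\Omega$ (the standard Hopf-type construction, using that $\Omega$ is strictly pseudoconvex). For the interior, I would apply $\cL$ to an auxiliary function of the form $w=\log|\nabla u|^2+\phi(-u)$ with $\phi$ a suitably chosen increasing concave function, exploit the already obtained bounds for $u$, $u_t$ and the lower bound of $\log\det(u_{i\bar j})$, and conclude via the maximum principle.

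The real work is the $C^2$ estimate. The boundary second-derivative bound is the main obstacle: as in the Caffarelli--Kohn--Nirenberg--Spruck treatment of the elliptic complex Monge-Amp\`ere equation, the tangential-tangential part is immediate from $u=\varphi$, the mixed tangential-normal part requires a barrier argument tailored to the complex structure (using a perturbation of $\rho$ that respects $J$), and the normal-normal part is extracted from the equation itself once the mixed terms and $u_t$ are controlled. Adapting this to the parabolic setting is routine but delicate, because one must include the $u_t$ term in the barriers and verify compatibility at $t=0$. For the interior, I would consider
\[
W(z,t,\xi)=\log u_{\xi\bar\xi}+\phi(|\nabla u|^2)+\psi(-u)
\]
maximized over unit $(1,0)$ vectors $\xi$, compute $\cL W$, and absorb the negative $\phi''$ and $\psi''$ contributions to bound $W$ in terms of its boundary values. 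Together with the boundary bound this gives $\|u_{i\bar j}\|_\infty\le C$, which together with the $u_t$ bound yields the full $C^{2,1}$ estimate \eqref{HL0}. The hardest step is the boundary double-normal estimate: the standard elliptic barriers must be modified to absorb the new parabolic term $u_t$ while preserving the concavity structure used in the Caffarelli--Kohn--Nirenberg--Spruck construction.
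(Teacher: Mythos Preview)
The paper does not supply its own proof of this theorem; it simply writes ``We refer the reader to \cite{HL} for a proof.'' Your outline is precisely the standard parabolic complex Monge--Amp\`ere hierarchy (sup bound, $u_t$ bound via differentiation in $t$, boundary and interior gradient estimates, then the CKNS-type boundary $C^2$ estimate together with a Yau/Aubin-type interior second-order estimate), which is what \cite{HL} carries out. So your proposal is consistent with what the paper intends, and there is nothing to compare beyond that.
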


We refer the reader to \cite{HL} for a proof.

Since the a priori estimate \eqref{HL0} is independent of $T$,
by Krylov's regularity theory, 
we obtain the global existence and regularity of solutions to \eqref{para-app}. 

\begin{theo}\label{para-exist}
Under the assumptions of Theorem \ref{para-estimate}, 
there exists a solution $u\in C^{3+\alpha, 2+\frac{\alpha}{2}}(\bar Q)$ to \eqref{para-app}.
\end{theo}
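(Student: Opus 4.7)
The plan is to combine the time-independent a priori estimate \eqref{HL0} from Theorem \ref{para-estimate} with short-time existence and Krylov's regularity theory for concave fully nonlinear parabolic equations, then close the loop by a standard continuation argument.

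First I would establish short-time existence on some interval $[0,T_0]$. Since the initial datum $\varphi(\cdot,0)$ is strictly pluri-subharmonic and smooth, and the compatibility condition \eqref{phit} is assumed, one can linearize \eqref{para-app} around $\varphi$ and apply the implicit function theorem in parabolic H\"older spaces, or equivalently solve the linearized Dirichlet problem and iterate. This produces a solution $u\in C^{2+\alpha,1+\alpha/2}(\bar Q_{T_0})$ that remains strictly pluri-subharmonic on $[0,T_0]$, so the equation is uniformly parabolic on this short interval.

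Next, I would use \eqref{HL0} to upgrade regularity. The operator $F(u_{i\bar j})=-\log\det(u_{i\bar j})$ is a smooth concave function of the positive Hermitian matrix $(u_{i\bar j})$. The bound \eqref{HL0} controls $\|u_{i\bar j}\|_{L^\infty}$ and, because $u$ must remain strictly pluri-subharmonic for parabolicity (this lower eigenvalue bound is built into the proof of Theorem \ref{para-estimate}), it also provides a uniform positive lower bound on the eigenvalues of $(u_{i\bar j})$. Hence \eqref{para-app} is uniformly parabolic with concave structure, and Krylov's interior Evans--Krylov theorem gives interior $C^{2+\alpha,1+\alpha/2}$ bounds; boundary $C^{2+\alpha,1+\alpha/2}$ estimates follow from Krylov's boundary regularity theory for concave parabolic equations on smooth domains (the strict pseudo-convexity of $\partial\Omega$ together with the subsolution $\varphi$ supplies the needed barriers). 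Differentiating the equation and applying linear parabolic Schauder theory with $C^{\alpha,\alpha/2}$ coefficients then bootstraps to $u\in C^{3+\alpha,2+\alpha/2}$ on $\bar Q_T$, with bounds depending only on the data appearing in Theorem \ref{para-estimate} and in particular independent of $T$.

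Finally, I would run a continuation argument. Let $T^\ast\in(0,\infty]$ be the supremum of times for which a $C^{3+\alpha,2+\alpha/2}$ solution exists. If $T^\ast<\infty$, the uniform estimates above extend the solution continuously to $t=T^\ast$ with the same regularity, and restarting the short-time existence at $t=T^\ast$ (the compatibility condition now being automatic since $u(\cdot,T^\ast)$ already solves the equation) produces an extension beyond $T^\ast$, contradicting maximality. Hence $T^\ast=\infty$ and $u\in C^{3+\alpha,2+\alpha/2}(\bar Q)$.

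The main obstacle is the second step: one has to know that the $C^{2,1}$ estimate of Theorem \ref{para-estimate} actually includes a uniform positive lower bound on the complex Hessian $(u_{i\bar j})$, for otherwise the equation could degenerate in time and Krylov's theorem would not apply. This strict pluri-subharmonicity bound is standard for this flow, coming from applying the maximum principle to $\log\det(u_{i\bar j})$ together with the assumption \eqref{gzts}, and it is the real content that lets classical parabolic regularity theory take over and produce the global smooth solution.
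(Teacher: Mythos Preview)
Your proposal is correct and follows exactly the route the paper indicates; the paper itself does not give a detailed proof of this theorem but simply remarks that the $T$-independent estimate \eqref{HL0} together with Krylov's regularity theory yields global existence, and your argument is the standard unpacking of that sentence. Your concern about the lower eigenvalue bound is well placed and correctly resolved: since \eqref{HL0} bounds $u_t$ and $u$, the equation itself together with the assumption on $g$ gives a two-sided bound on $\log\det(u_{i\bar j})$, which combined with the upper bound on $(u_{i\bar j})$ forces a uniform positive lower bound on the eigenvalues.
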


The Sobolev inequality for the complex Monge-Amp\`ere equation is as follows.

\begin{theo} \label{sobolev}
Let $\Omega$ be a bounded, smooth, pseudo-convex domain.
Then for any $p>1$,
\beq\label{up}
\|u\|_{L^{p}(\Omega)}\leq C \|u\|_{\mathcal{PSH}_0(\Omega)}, 
      \  \ \ \forall\ u\in \mathcal{PSH}_0(\Omega)\cap C^{\infty}(\bar \Omega) ,
\eeq
where $C$ depends on $n$, $p$ and $\text{diam}(\Omega)$.
\end{theo}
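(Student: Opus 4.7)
The plan is to construct an extremal of the Sobolev ratio by a parabolic Monge-Amp\`ere flow, reduce to a ball by monotonicity, and then induct on the complex dimension with the help of the Brezis-Merle inequality. Define the sharp constant
$$C_p(\Omega):=\sup\bigl\{\|u\|_{L^p(\Omega)}/\|u\|_{\mathcal{PSH}_0(\Omega)}:\ u\in\mathcal{PSH}_0(\Omega)\cap C^\infty(\bar\Omega),\ u\not\equiv 0\bigr\};$$
Theorem \ref{sobolev} is equivalent to the bound $C_p(\Omega)\leq C(n,p,\mathrm{diam}(\Omega))$. To produce an extremal I would run the descent flow
\beq\label{flow-sobolev}
u_t=\log\det(u_{i\bar j})-\log\!\bigl[\lambda(t)(-u)^{p-1}\bigr],\qquad u|_{\partial\Omega}=0,
\eeq
with a pluri-subharmonic initial datum and $\lambda(t)$ chosen to enforce $\|u(\cdot,t)\|_{L^p}\equiv 1$. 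After a mild regularization of $\log(-u)^{p-1}$ near $\partial\Omega$, \eqref{flow-sobolev} falls within the scope of Theorems \ref{para-estimate} and \ref{para-exist}, giving global existence and $C^{2+\alpha,1+\alpha/2}$ a priori bounds independent of $t$. A direct computation shows that $\mathcal{E}(u(\cdot,t))$ is monotone non-increasing along \eqref{flow-sobolev}, so passing to a limit yields a smooth extremal $u_\ast\in\mathcal{PSH}_0(\Omega)$ solving the Euler-Lagrange equation $(dd^c u_\ast)^n=c_p(-u_\ast)^{p-1}\,dV$ with $c_p\sim C_p(\Omega)^{-(n+1)}$.

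Next, extension by zero shows $C_p(\Omega_1)\leq C_p(\Omega_2)$ whenever $\Omega_1\subset\Omega_2$, so it suffices to control $C_p(B_R)$ for a ball $B_R$ of radius $R=\mathrm{diam}(\Omega)$. I would then argue by induction on the complex dimension $n$. The base case $n=1$ yields $\mathcal{E}(u)=\tfrac12\int_\Omega(-u)\Delta u\,dV=\tfrac12\|\nabla u\|_{L^2}^2$, so \eqref{up} reduces to the classical Sobolev inequality $\|u\|_{L^p}\leq C\|\nabla u\|_{L^2}$ for $W^{1,2}_0$-functions on a planar bounded domain. For the inductive step I would assume the bound in complex dimension $n-1$ and argue by contradiction: if $C_p(B_R)=\infty$, one produces a sequence of extremals $u_k$ with $\mathcal{E}(u_k)=1$ and $\|u_k\|_{L^p}\to\infty$, applies the generalized Brezis-Merle inequality (Theorem \ref{BMq} of Section 3) to the corresponding Euler-Lagrange equation to force the Monge-Amp\`ere mass of $u_k$ to concentrate at finitely many points, and then blows up at such a point to obtain a limiting pluri-subharmonic extremal supported on a complex hyperplane, which by the inductive hypothesis is bounded in $L^p$, contradicting the concentration.

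The main obstacle I anticipate is this final blow-up and slicing step. In the real $k$-Hessian treatment of \cite{W1}, the moving-plane method forces the extremal on the ball to be radially symmetric and reduces the problem to an ODE; the complex Monge-Amp\`ere operator lacks the relevant reflection symmetry, so radial symmetry is unavailable. The concentration analysis via Brezis-Merle combined with dimension induction substitutes for it, but it requires carefully identifying the local geometry of the limit at a concentration point, verifying that the limiting object is pluri-subharmonic on a lower-dimensional section with controlled Monge-Amp\`ere mass and $L^p$ norm, and keeping every constant quantitative so that the $(n-1)$-dimensional hypothesis can be invoked in the exact form needed.
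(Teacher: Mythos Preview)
Your overall architecture---gradient flow, domain monotonicity, induction on the dimension via a Brezis--Merle bound---matches the paper's. Two of your steps, however, contain genuine gaps that the paper handles quite differently.

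\textbf{Monotonicity.} Your claim that ``extension by zero shows $C_p(\Omega_1)\le C_p(\Omega_2)$'' fails. If $u\in\mathcal{PSH}_0(\Omega_1)$ and you set $\tilde u=0$ on $\Omega_2\setminus\Omega_1$, then $\tilde u$ is \emph{not} plurisubharmonic on $\Omega_2$: at any $p\in\partial\Omega_1$ the sub-mean-value inequality is violated (and even were $\tilde u$ PSH, its Monge--Amp\`ere measure would pick up a boundary contribution, so $\mathcal E$ would jump). The paper flags this explicitly in Step~3 and proceeds indirectly: assuming $T_{p,\Omega_1}<T_{p,\Omega_2}$, it extends $\Psi:=\det(u_{1,i\bar j})$ (not $u_1$) by zero, solves the Dirichlet problem $(dd^cu_2)^n=\Psi_k$ on the ball $\Omega_2$ for a smooth approximation $\Psi_k$, and then uses convexity of an auxiliary functional $E$ along the segment joining $u_1$ to $u_2$ to reach a contradiction. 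This is substantially more delicate than a one-line extension.

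\textbf{The inductive step.} Your concentration/blow-up scheme is the real obstacle, as you suspect. There is no mechanism forcing a blow-up limit of solutions of $(dd^cu_k)^n=c_k(-u_k)^{p-1}$ on $B_R\subset\mathbb C^n$ to live on a complex \emph{hyperplane}; one expects an entire solution on $\mathbb C^n$, so the $(n{-}1)$-dimensional hypothesis cannot be invoked as you describe. The paper's induction runs the other way: Lemma~\ref{BMS-1} is a \emph{slicing} computation showing that the $(n{-}1)$-dimensional Sobolev inequality on balls yields the $n$-dimensional weak Brezis--Merle bound $\|u\|_{L^p}\le C\,\mathcal M(u)^{1/n}$ on balls. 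Step~2 then uses the gradient flow not to manufacture an extremal but inside a contradiction: if the Sobolev constant were too small, the flow would converge to a solution of $\det(v_{i\bar j})=\lambda\beta(v)f(v)$ with $J(v)\le -1$, and the already-established $n$-dimensional Brezis--Merle bound forces $\lambda\ge c>0$. You have effectively reversed the order---trying to pass from $n$-dimensional Brezis--Merle to $(n{-}1)$-dimensional Sobolev via blow-up---without first establishing the $n$-dimensional Brezis--Merle estimate from the $(n{-}1)$-dimensional data.
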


\noindent{\it Proof of Theorem \ref{sobolev}.} 
Note that inequality \eqref{up} in the case $p\leq n$ can be established by a direct computation \cite{CP}.
In the following we will assume that $p>n$. 
Denote
\beq\label{ratio}
T_{p, \Om} =: \inf_{u\in \mathcal{PSH}_0(\Omega)} \frac {\mathcal E(u)}{\|u\|^{n+1}_{L^{p+1}(\Om)}} .
\eeq
It suffices to prove
\beq\label{TT2}
T_{p, \Om}\ge  \lam  % % T_{p, \Phi^s_0(B)} 
\eeq
for some small constant $\lam>0$. 
%Our proof is to reduce the problem to the Sobolev inequality on balls in $\mathbb C^n$.
We divide the proof  into four steps.
%the ball $B^n$ with arbitrary radius in $\C^n$. We denote $D=\{t\in \C\big||t|^2\leq R\}$ the disk with same radius of $B^n$. 

\noindent{\it Step 1:} We show that the Sobolev inequality on balls of dimension $n-1$ 
implies an weak Brezis-Merle type inequality on balls of dimension $n$,  
in terms of the {\it complex Monge-Amp\`ere mass} 
\begin{eqnarray*}
\mathcal M(u)&=&\int_\Omega (dd^cu)^n
=\frac{1}{ n!}\int_{\Omega}\det(u_{i\bar j}).
\end{eqnarray*}

\begin{lem}\label{BMS-1}
Assume that the Sobolev type inequality 
\beq
\|u\|_{L^p(B)} \leq C_0\|u\|_{\mathcal{PSH}_0(B)} \ \ \ \forall\ u\in \mathcal{PSH}_0(B)\cap C^{\infty}(\bar B)
\eeq
holds for $p>0$ on any ball $B=B^{(n-1)}_r\subset \mathbb C^{n-1}$,
where $C_0$ depends on $n, p$, 
and the upper bound of the radius $r$. Then the following inequality 
\beq\label{BMS}
\Big(\int_\Omega |u|^{p}\Big)^{\frac{1}{p}}
  \leq \tilde C\cdot C_0 \Big(\int_\Omega (dd^cu)^{n}\Big)^{\frac{1}{n}}
\ \ \ \forall\ u\in \mathcal{PSH}_0(\Omega)\cap C^{\infty}(\bar \Omega),
\eeq
holds on any ball $\Omega\subset \mathbb C^{n}$ of the same radius $r$,
where $\tilde C$ depends on the radius but is independent of $p$.
\end{lem}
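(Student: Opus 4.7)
The plan is to slice $\Omega = B^{(n)}_r$ by the complex hyperplanes $\{z_n = \text{const.}\}$, apply the hypothesized $(n-1)$-dimensional Sobolev inequality to the restriction of $u$ to each slice, and then integrate back in $z_n$.

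Write $z = (z', z_n)$ with $z' \in \C^{n-1}$. For $|z_n| < r$ the slice $\Omega_{z_n} := \{z' : (z', z_n) \in \Omega\}$ is a ball of radius $\sqrt{r^2 - |z_n|^2} \le r$ in $\C^{n-1}$, and since $u$ vanishes on $|z| = r$, the restriction $u(\cdot, z_n)$ lies in $\mathcal{PSH}_0(\Omega_{z_n}) \cap C^{\infty}(\overline{\Omega_{z_n}})$. Applying the hypothesized Sobolev inequality to this slice gives
\[
\Bigl(\int_{\Omega_{z_n}} |u|^{p}\,dV'\Bigr)^{1/p} \le C_0 \Bigl[\tfrac{1}{n}\int_{\Omega_{z_n}} (-u)(dd^c_{z'} u)^{n-1}\Bigr]^{1/n}.
\]
Raising to the $p$th power and integrating in $z_n$,
\[
\int_\Omega |u|^{p}\,dV \,\le\, C_0^{p}\, n^{-p/n} \int F(z_n)^{p/n}\,dV(z_n),\qquad F(z_n) := \int_{\Omega_{z_n}}(-u)(dd^c_{z'} u)^{n-1}.
\]

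An elementary exterior-algebra observation ties the horizontal Monge-Amp\`ere form to the full one: every term in the expansion of $(dd^c u)^{n-1}$ that carries a $dz_n$ or a $d\bar z_n$ factor annihilates when further wedged with $\omega_n := \tfrac{i}{2} dz_n \wedge d\bar z_n$, so $(dd^c u)^{n-1} \wedge \omega_n = (dd^c_{z'} u)^{n-1} \wedge \omega_n$. In particular $\int F(z_n)\,dV(z_n) = \int_\Omega (-u)(dd^c u)^{n-1} \wedge \omega_n$, which already identifies $\int F\,dV(z_n)$ with a (mixed) Monge-Amp\`ere energy of the original $u$.

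The main obstacle is then to bound $\bigl(\int F(z_n)^{p/n}\,dV(z_n)\bigr)^{1/p}$ by $\tilde{C}\,\mathcal{M}(u)^{1/n}$ with $\tilde{C}$ independent of $p$. A slicewise pointwise bound $F(z_n) \le C\,\mathcal{M}(u)$ is not available---one may concentrate mass along a single fiber $\{z_n = z_n^{0}\}$ while keeping the total mass bounded---so the estimate must be averaged. The plan is to apply Stokes' theorem (using $u|_{\partial\Omega} = 0$) together with a Chern--Levine--Nirenberg type inequality for mixed Monge-Amp\`ere currents in order to transfer the factor $(-u)$ off the horizontal form and onto the vertical $\omega_n$; a truncation $u_s := \max(u, -s)$ may be inserted to handle the possible unboundedness of $u$. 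A H\"older inequality in the variable $z_n$, with exponents tuned so that the resulting constant stays uniformly bounded in $p$, should then close the estimate. This last step---trading slicewise energy against total mass at a $p$-uniform rate---is the technical heart of the proof and the point requiring the most care.
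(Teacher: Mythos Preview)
Your slicing setup and the application of the $(n-1)$-dimensional Sobolev inequality on each fiber are exactly what the paper does, and you have correctly isolated the remaining task: controlling $\bigl(\int F(z_n)^{p/n}\,dV(z_n)\bigr)^{1/p}$ by $\mathcal M(u)^{1/n}$. However, the closing argument you sketch---Chern--Levine--Nirenberg, truncation, and a H\"older step---is where the proposal has a genuine gap. Those tools will at best control $\int F\,dV(z_n)$ (the mixed energy you already identified), not $\int F^{p/n}$; there is no $p$-uniform way to pass from the former to the latter by H\"older alone, and the slicewise concentration phenomenon you yourself flag is exactly the obstruction. As written, the ``technical heart'' is left entirely open.

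The idea you are missing is to treat $F$ (the paper writes $v(\xi)$) as a nonnegative function on the unit disk $D\subset\C$ vanishing on $\partial D$, and to control its \emph{Laplacian in $\xi$} rather than the function itself. A direct computation---differentiating under the integral sign and carefully tracking the boundary term arising from the moving slice $\Omega_{z_n}$---yields
\[
\int_D \bigl|\Delta_\xi v\bigr|\,d\mu_\xi \;\le\; 2\int_\Omega (dd^c u)^n .
\]
The boundary contribution is the outward normal derivative of $(-u)\det(u_{w^i\bar w^j})$ on $\partial\Omega_{z_n}$, which has a sign because this integrand is nonnegative in the interior and vanishes on the boundary; this sign is what makes the estimate go through. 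Once $\|\Delta_\xi v\|_{L^1(D)}\le 2\,\mathcal M(u)$ is established, the two-dimensional Brezis--Merle inequality for the Laplacian gives $\|v\|_{L^q(D)}\le C\,\|\Delta_\xi v\|_{L^1(D)}$ for every $q>0$, and taking $q=p/n$ closes the chain of inequalities. Thus the missing ingredients are (i) the Laplacian computation for $v$ with its boundary-sign argument, and (ii) an appeal to Brezis--Merle in real dimension two---not a CLN/H\"older manoeuvre.
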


\begin{proof}
Without loss of generality, we assume the balls are of radius $1$ and all centered at the origin. 
Write $z=(w,\xi)\in \mathbb C^{n-1}\times \mathbb C$. 
Let $D$ be the disk in $\mathbb C$ of the same radius $r$. 
For any $\xi=t+\sqrt{-1}s\in D$, denote
$D_\xi:=\{w\in \mathbb C^{n-1}\ | \ |w|^2\leq 1-|\xi|^2\}$. 
For $u(z)\in \mathcal{PSH}_0(\Omega)\cap C^{\infty}_0(\bar\Omega)$, 
we denote 
$$v(\xi)= \int_{D_\xi} (-u)(d_wd^c_wu)^{n-1}.$$

First we claim:
\beq\label{diff}
\int_D|-\tr_\xi v(\xi)|\,dt\,ds\leq 2\int_{\Omega}(dd^cu)^n.
\eeq
Here $\tr_\xi=\frac{1}{\sqrt{-1}} d_\xi d_\xi^c=\frac{\p^2}{\p t^2}+\frac{\p^2}{\p s^2}$.
In order to compute $\tr_\xi v$, we use the spherical coordinates $(r, \theta)$ on $\mathbb C^{n-1}$. 
For convenience, we denote $(-u)\det(u_{w^i\bar{w}^j}):=F(w, \xi)=F(r, \theta, \xi)$, where $r=|w|$.
Then 
\beqs
v(\xi)=\int_0^{\sqrt{1-t^2-s^2}}\int_{S^{2n-3}}F(r, \theta, \xi) \cdot r^{2n-3}\,d\sigma\,dr,
\eeqs 
where $d\sigma$ is the standard measure on $S^{2n-3}$. 
Hence, we have 
\beqs
\frac{\p v}{\p t} &=&\int_{S^{2n-3}}[F(r, \theta, \xi)\cdot r^{2n-3}]\big |_{r=\sqrt{1-t^2-s^2}} \,d\sigma \cdot \frac{\p \sqrt{1-t^2-s^2}}{\p t}\\
&&+\int_0^{\sqrt{1-t^2-s^2}}\int_{S^{2n-3}}\frac{\partial}{\partial t}[F(r, \theta, \xi)\cdot r^{2n-3}]\,d\sigma\,dr\\
&=&\int_0^{\sqrt{1-t^2-s^2}}\int_{S^{2n-3}}\frac{\partial}{\partial t}F(r, \theta, \xi) \cdot r^{2n-3}\,d\sigma\,dr.
\eeqs
Here the first term vanishes by $u\big|_{\p B}=0$. The second derivative 
\beqs
\frac{\p^2 v}{\p t^2} &=&\int_{S^{2n-3}}\frac{\partial}{\partial t}F(r, \theta, \xi)\big |_{r=\sqrt{1-t^2-s^2}}  \cdot (\sqrt{1-t^2-s^2})^{2n-3}]\,d\sigma \cdot \frac{\p \sqrt{1-t^2-s^2}}{\p t}\\
&&+\int_0^{\sqrt{1-t^2-s^2}}\int_{S^{2n-3}}\frac{\partial^2}{\partial t^2}F(r, \theta, \xi)\cdot r^{2n-3}\,d\sigma\,dr.
\eeqs
Similarly, we can compute $\frac{\p^2 v}{\p s^2}$. Hence,
\beq\label{la}
\tr_\xi v=\int_0^{\sqrt{1-t^2-s^2}}\int_{S^{2n-3}}\tr_\xi F(r, \theta, \xi) \cdot r^{2n-3}\,d\sigma\,dr
+\int_{S^{2n-3}}G(\theta,\xi)\,d\sigma, 
\eeq
where 
$$G(\theta,\xi)=\frac{\partial F}{\partial t}\big |_{r=\sqrt{1-t^2-s^2}}\cdot\frac{\p \sqrt{1-t^2-s^2}}{\p t}+\frac{\partial F}{\partial s}\big |_{r=\sqrt{1-t^2-s^2}}\cdot\frac{\p \sqrt{1-t^2-s^2}}{\p s}$$
is the outward normal derivative of $(-u)\det (u_{w^i\bar{w}^j})$ on the boundary of $D_\xi$ (for a fixed $\xi$).  
Note that $(-u)\det (u_{w^i\bar{w}^j})>0$ in $D_\xi$ and $(-u)\det (u_{w^i\bar{w}^j})=0$ on the boundary. We have $G(\theta,\xi)\leq 0$.  

By the divergence theorem and $\D_\xi v\big|_{\p D}=0$, we have 
\beqs
0&= & \int_{D}-\tr_\xi v(\xi)\,d\mu_\xi   \\
&= &  -\int_{D}\int_0^{\sqrt{1-t^2-s^2}}\int_{S^{2n-3}}\tr_\xi F(r, \theta, \xi) \cdot r^{2n-3}\,d\sigma\,dr
- \int_{D}\int_{S^{2n-3}}G(\theta,\xi)\,d\sigma\,d\mu_\xi.
\eeqs
Changing the coordinates back to $(w,\xi)$, the first term in \eqref{la}
equals $\int_{\Omega}(dd^cu)^n$.
By the non-positivity of $G$, $$ \int_{D}\left|\int_{S^{2n-3}}G(\theta,\xi)\,d\sigma\right|\,d\mu_\xi\leq \int_{\Omega}(dd^cu)^n.$$
The claim is proved.

By the Sobolev inequality in dimension $n-1$,
\begin{eqnarray*}
\Big(\int_\Omega |u|^{p}\Big)^{\frac{1}{p}}&=&\Big (\int_{|\xi|^2\leq 1}\int_{D_\xi} |u|^{p}\,d\mu_w \,d\mu_\xi\Big )^{\frac{1}{p}} \\
&\leq& C_0\Big (\int_{|\xi|^2\leq 1}\Big (\int_{D_\xi}(-u)\det(u_{w^i\bar w^j})\Big )^{\frac{p}{n}} d\mu_\xi\Big )^{\frac{1}{p}} \\
&=& C_0\Big (\int_{|\xi|^2\leq 1} [v(\xi)]^{\frac{p}{n}}\,d\mu_\xi\Big )^{\frac{1}{p}}\\
&\leq & C\cdot C_0\Big (\int_{|\xi|^2\leq 1} |-\triangle_\xi v(\xi)|\Big )^{\frac{1}{n}}\\
 & \leq & C\cdot C_0\Big (\int_\Omega (dd^cu)^{n}\Big )^{\frac{1}{n}}.
\end{eqnarray*}
The Brezis-Merle inequality in real dimension $2$(see \eqref{laplace}) is used in the last inequality.
\end{proof}

\noindent{\it Step 2:} 
We show that the weak Brezis-Merle type inequality in dimension $n$ for any smooth pseudo-convex 
domain $\Omega\subset\mathbb C^n$ implies the Sobolev type inequality in the same dimension. 
So we first assume the following inequality
\beq\label{BMS1} 
\Big (\int_\Omega |u|^{p}\Big )^{\frac{1}{p}}\leq C_1 \Big (\int_\Omega (dd^cu)^{n}\Big )^{\frac{1}{n}}, \ 
\ \ \forall\ u\in \mathcal{PSH}_0(\Omega)\cap C^{\infty}(\bar \Omega) 
\eeq
holds for some $C_1>0$ depending only on $n$, $diam(\Omega)$.

As in \cite{W1}, we denote 
\beq\label{f(t)}
f(t)=\Big\{ {\begin{split}
&|t|^p\ \ \hskip18pt \ \ |t|\le M,\\
&e^{-M} t^{-2}\ \ \ |t|\ge M+e^{-M},
\end{split}} 
\eeq
where $M>1$ is a large constant; and
denote 
\beq\label{F-1}
J(u) = \int_\Omega (-u)  \det(u_{i\bar j})  \,dV
             -  \lam\Big[ (p+1) \int_\Omega F(u) \,dV\Big]^{\frac{n+1}{p+1}},
\eeq
where $F(u)=\int_0^{|u|} f(t)\,dt$.
If \eqref{TT2} is not true, then for a small $\lambda>0$ and large $M$,  
we have  
\beq\label{111}
\inf_{u\in \mathcal{PSH}_0(\Omega)\cap C^{\infty}(\bar \Omega)} J(u)<-1.
\eeq

To make the equation non-degenerate, 
we replace $f$ by $f_\delta=f+\delta$ for a small $\delta>0$ and consider 
\beq\label{F-1}
J_\delta(u) = \int_\Omega (-u)  \det(u_{i\bar j})  \,dV
             -  \lam\Big[ (p+1) \int_\Omega F_\delta(u) \,dV\Big]^{\frac{n+1}{p+1}},
\eeq
where $F_\delta(u)=\int_0^{|u|} f_\delta(t)\,dt$.
Then for sufficiently small $\delta>0$, we still have
\beq\label{111d}
\inf_{u\in \mathcal{PSH}_0(\Omega)\cap C^{\infty}(\bar \Omega)} J_\delta(u)<-1.
\eeq
 Now consider the parabolic equation
\beq\label{PE}
\Bigg\{
{\begin{split}
 &u_t - \log\det(u_{i\bar j}) = -\log \lam \beta_\delta(u) f_\delta(u)\ \ \ \text{in}\ Q=\Omega\times (0, \infty),\\
 &u(x, 0)= w_\eps, \ \ \ \text{and}\ \  \\
 & u=0 \hskip26pt \ \text{on}\ \p \Omega\times (0, \infty),\
 \end{split} } \eeq
where $w_\eps$ is chosen such that
$$J_\delta(w_\eps)\le \inf_{u\in \mathcal{PSH}_0(\Omega)\cap C^{\infty}(\bar \Omega)} J_\delta(u)+\eps<-1,$$ 
and 
$$\beta_\delta(u)=\Big[(p+1) \int_\Omega F_\delta(u) \,dV\Big]^{\frac {n-p}{p+1}}.$$
\eqref{PE} is a descent gradient flow for the functional $J_\delta$. 
By Theorem \ref{para-exist}, 
there exists a solution $u_{\delta}(z, t)\in C^{3+\alpha,2+\frac{\alpha}{2}}(\bar Q)$. 
To apply Theorem \ref{para-exist}, we need the compatibility condition \eqref{phit}. 
In order that $w_\eps$ satisfies \eqref{phit}, 
we can modify $w_\eps$ slightly near the boundary, by solving the Dirichlet problem
$\det w_{i\bar j}=g$ in $\Omega$, $w=0$ on $\p\Omega$, where $g\in C^2(\overline\Omega)$, 
$g=\det (w_\eps)_{i\bar j}$ in $\Omega_\sigma :=\{z\in\Om\ |\ \text{dist}(z, \p\Omega)>\sigma\}$
and $g=1$ on $\p\Omega$, for a sufficiently small $\sigma>0$.

%To apply Theorem \ref{para-exist}, we need the compatibility condition \eqref{phit}. 
%We can replace the boundary condition $u=0$ on $\p \Omega\times (0, \infty)$
%by $u=\phi_\sigma$ on $\p \Omega\times (0, \infty)$, 
%and choose a smooth function $\phi_\sigma(x, t)$, such that it vanishes when $t>\sigma$, 
%and satisfies the compatibility condition \eqref{phit}. 
%By sending $\sigma \to 0^+$, we obtain a solution $u\in C^{2,1}(\overline Q_T)\cap C^\infty(Q_T)$.

Note that 
$$\frac{d}{dt}J_\delta(u(\cdot, t))=-\int_\Omega[\det (u_{i\bar j})-\lambda\beta_\delta(u)f_\delta(u)]u_t \, dV\leq 0$$
along the flow.
Hence, there exists a sequence $t_j\to\infty$ so that $\frac{d}{dt}J_\delta(u_{\delta}(\cdot, t))\to 0$. 
Since $u_{\delta}(\cdot, t_j)$ are uniformly bounded, there exists a subsequence which
converges to a solution $v_{\delta}(z)\in C^2(\bar \Omega)$ to 
\beq\label{DP1d}
{\begin{split}
 \det(v_{i\bar j}) & = \lambda\beta_\delta(v) f_\delta(v) \hskip10pt \text{in}\  \Omega, \\
   v &=0\ \hskip55pt \text{on}\ \p \Omega,
   \end{split}}
\eeq
with $J_\delta(v_\delta)\leq -1$. Note that by \eqref{111d}, $\beta_\delta(v_\delta)\leq C\lambda^{\frac{p-n}{n+1}}$.

\noo {\bf Claim}: For sufficiently large $M$ we have, 
\begin{align}
 f_{\delta}(v_\delta)= & (1+o(1))\left(|v_\delta|^{p}+\delta\right); \label{beta1}\\
\beta_\delta(v_\delta)= & (1+o(1)) \Big[ \int_\Omega |v_\delta|^{p+1}\,d\mu\Big]^{(n-p)/(p+1)}.\label{beta}
\end{align}

To prove the first statement \eqref{beta1} in claim, 
denote $\Omega^*=\{z\in \Omega: v_\delta(x)\leq -M-e^{-M}\}$. Write the equation as
$$V^{i\bar j}(v_\delta)_{i\bar j}=n\lambda \beta_{\delta}(v_\delta)f_{\delta}(v_\delta),$$
where $(V^{i\bar j})$ is the cofactor matrix of $((v_\delta)_{i\bar j})$. 
By Aleksandrov's maximum principle, 
\begin{eqnarray*}
  \inf_\Omega v_\delta&\geq& \inf_{\partial \Omega^*}v_\delta-C\left[\int_{\Omega^*}
     \frac{|n\lambda\beta_{\delta}(v_\delta)f_{\delta}(v_\delta)|^{2n}}{[\det (V^{i\bar j})]^2}\,d\mu\right]^{\frac{1}{2n}}\\
&=&\inf_{\partial \Omega^*}u_\delta-nC\left[\int_{\Omega^*}[\lambda\beta_{\delta}(v_\delta)f_{\delta}(v_\delta)]^2\,d\mu\right]^{\frac{1}{2n}}\\
&\geq & -M-C(1+\delta)e^{-\frac{M}{2n}}-C\delta^2.
\end{eqnarray*}
Here $C$ depends on $\text{diam}(\Omega)$, $n$ and $\lambda$.
Hence, \eqref{beta1} holds by choosing $M$  sufficiently large.
The second statement \eqref{beta} in the claim can be derived from the first one. 
Since $(1+x)^{-\alpha}\geq 1-\alpha x$ when $0<\alpha<1$,  by a direct computation and noting that $p>n$, we have
$$C_2\|v_\delta\|_{L^{p+1}(\Omega)}^{n-p}-C_2\frac{p-n}{p+1}\delta \|v_\delta\|_{L^1(\Omega)}\|v_\delta\|_{L^{p+1}(\Omega)}^{n-2p-1} \leq \beta_{\delta}(v_\delta)
      \leq  C_2\|v_\delta\|_{L^{p+1}(\Omega)}^{n-p},$$ 
where $C_2$ is independent of $p$. Then \eqref{beta} follows. 

Now by \eqref{beta1},
$$\int_\Omega f_{\delta}(v_\delta)\leq C_3\Big[\int_\Omega (-v_\delta)^{p+1}\,d\mu\Big]^{\frac{p}{p+1}}+\delta|\Omega|,$$
where $C_3$ depends only on $|\Omega|$. Since $|\Omega|\leq C\cdot \text{diam}(\Omega)^{2n}$, the constant above depends only on the diameter of the domain.
By \eqref{BMS1}, 
we have
\begin{eqnarray}\label{computation}
\|v_\delta\|_{L^{p+1}}(\Omega) 
  &\leq& C_1\Big[\int_\Omega (dd^cv_\delta)^{n}\Big]^{\frac{1}{n}}\nonumber\\
   &=&C_1\Big (\int_\Omega \lambda\beta_{\delta}(v_\delta)f_{\delta}(v_\delta)\,d\mu\Big )^{\frac{1}{n}}\nonumber\\
  &\leq& C_1\lambda^{\frac{1}{n}}\beta_{\delta}^{\frac{1}{n}}\big(C_3\|v_\delta\|_{L^{p+1}(\Omega)}^{p}
   +\delta\cdot\text{diam}(\Omega)^{2n}\big)^{\frac{1}{n}}.
\end{eqnarray}
By the a priori estimates in \cite{CKNS}, $v_{ \delta}$ converges to a function $u\in C^{3,1}(\Omega)\cap C^{1,1} (\bar\Omega)$ which solves
\beq\label{DP1}
{\begin{split}
 \det(u_{i\bar j}) & = \lambda\beta(u) f(u) \hskip30pt \text{in}\  \Omega, \\
   u &=0\ \hskip70pt \text{on}\ \p \Omega,
   \end{split}}
\eeq
with $J_\delta(u)\leq -1$ as $\delta\to 0$.  
Here  $\beta(\cdot)=\beta_\delta(\cdot)|_{\delta=0}$.
By the Azela-Ascoli Theorem, $\|v_{\delta}\|_{L^{p+1}(\Omega)}\to \|u\|_{L^{p+1}(\Omega)}$ and $\beta_{\delta}(v_{\delta}) \to \beta(u)\sim\|u\|_{L^{p+1}(\Omega)}^{n-p}$. By \eqref{computation}, 
we get $\lambda\geq C\cdot C_1^{-n}$,
where $C$ is independent of $p$. This is a contradiction when $\lambda$ is chosen small.

%we also have 
%\beq\label{Max2}
%\inf u_\eps \ge -M-1
%\eeq
%if $\bar \eps>0$ is small ($\bar\eps$ is in \eqref{f(t)}).
%See the formula between (5.24) and (5.25) in \cite{W1}.

\vskip 15pt

\noindent{\it Step 3:}
In this step, we suppose that $\Omega_1$ is a smooth, strictly pseudo-convex domain,
and $\Omega_2$ is a ball with $\Omega_1\subset\Omega_2\subset\mathbb C^n$. 
We show
\beq\label{mono}
T_{p,\Omega_1}\geq T_{p,\Omega_2}.
\eeq

Suppose to the contrary that $T_{p,\Omega_1}< T_{p,\Omega_2}$. 
Choose $\lambda\in (T_{p,\Omega_1}, T_{p,\Omega_2})$. 
Let $J(u,\Omega)$ be the functional given in \eqref{F-1}. Then we have
\begin{eqnarray*}
&&\inf\{J(u,\Omega_1): u\in \mathcal{PSH}_0(\Omega_1)\cap C^{\infty}(\bar \Omega_1)\} <-1,\\
&&\inf\{J(u,\Omega_2): u\in \mathcal{PSH}_0(\Omega_2)\cap C^{\infty}(\bar \Omega_2)\} \geq 0 
\end{eqnarray*}
when $M>>1$. Suppose $u_1$ is the solution to \eqref{DP1} on $\Omega_1$ obtained as Step 1. 

Extend $u_1$ to $\Omega_2$ such that $u_1=0$ on $\Omega_2\setminus\bar\Omega_1$
(so $u_1$ is not pluri-subharmonic in $\Omega_2$). Extend $\Psi$ to on $\Omega_2$ by $\Psi=0$ in $\Omega_2\setminus\bar\Omega_1$.
Denote
\beq\label{F-3}
E(u) = \int_{\Omega_2} (-u)  \Psi  \,d\mu
             -  \lam\Big[ \int_{\Omega_2} |u|^{p+1} \,d\mu\Big]^{\frac{n+1}{p+1}}.
\eeq
Since $u_1=0$ outside $\Omega_1$, $E(u_1)\leq J(u_1,\Omega_1)\leq -1$.

Let $\Psi_k$ be a sequence of smooth, monotone decreasing approximation of $\Psi$ such that 
$$\Psi_k>0,\ \sup_{\Omega_1}|\Psi_k-\Psi|<e^{-k},\ \int_{\Omega_2}|\Psi_k-\Psi|^2\,d\mu<e^{-2k}.$$ 
Let $u_2=u_{2,k}$ be the solution to
\beq\label{DP2}
{\begin{split}
 \det(u_{i\bar j}) & = \Psi_k \hskip20pt \text{in}\  \Omega_2, \\
   u &=0\ \hskip20pt  \text{on}\ \p \Omega_2.
   \end{split}}
\eeq
By \cite{CP} (note that when $\Omega_2$ is a ball, it is a direct computation), we have $$\int_{\Omega_2}(-u_2)\leq C\|\Psi_k\|_{L^2(\Omega_2)}^{\frac{1}{n}}$$
for a constant $C$ depending on $n$ and $\text{diam}(\Omega_2)$. Then we can choose $k$ sufficiently large such that 
by the H\"older inequality,
$$\int_{\Omega_2}(-u_2)\left(\det(u_{2,i\bar{j}})-\Psi\right)<\frac{1}{2}.$$

By the comparison principle, $u_2<u_1\leq 0$ in $\Omega_1$ and $u_2$ is uniformly bounded.
\begin{eqnarray*}\label{F-4}
E(u_2) &=& \int_{\Omega_2} (-u_2)  \Psi  \,dV
             -  \lam\Big[ \int_{\Omega_2} |u_2|^{p+1}\,d\mu\Big]^{\frac{n+1}{p+1}}\\
&\geq& \int_{\Omega_2} (-u_2)  \det(u_{2,i\bar j}) dV
             -  \lam\Big[ \int_{\Omega_2} |u_2|^{p+1}\,d\mu\Big]^{\frac{n+1}{p+1}}-\frac{1}{2}\\
&\geq& -\frac{1}{2}.
\end{eqnarray*}
Denote $\Phi(t)=E(u_1+t(u_2-u_1))$. Then $\Phi(0)=E(u_1)\leq -1$, $\Phi(1)\geq -\frac{1}{2}$.
By direct computation,
\begin{eqnarray*}
\Phi'(0)=\int_{\Omega_2} (u_1-u_2)  \Psi  \,d\mu
             -  (n+1)\lam\Big[ \int_{\Omega_2} |u_1|^{p+1}\,d\mu\Big]^{\frac{n+1}{p+1}} 
\int_{\Omega_2} |u_1|^{p}(u_1-u_2)\,d\mu.
\end{eqnarray*}
Note that by Step 2,
$$\beta(u_1) =(1+o(1)) \Big[ \int_\Omega |u_1|^{p+1}\,d\mu\Big]^{(n-p)/(p+1)}.$$
It follows that 
\begin{eqnarray*}
\int_{\Omega_1} (u_1-u_2)  \det(u_{1,i\bar j})  \,d\mu
&=&  (1+o(1))\lam\Big[ \int_{\Omega_1} |u_1|^{p+1}\,d\mu\Big]^{\frac{n+1}{p+1}} 
\int_{\Omega_1} |u_1|^{p}(u_1-u_2)\,d\mu\\
             &<&   (n+1)\lam\Big[ \int_{\Omega_1} |u_1|^{p+1}\,d\mu\Big]^{\frac{n+1}{p+1}} 
\int_{\Omega_1} |u_1|^{p}(u_1-u_2)\,d\mu.
\end{eqnarray*}
This implies $\Phi'(0)<0$. Note that the functional $E$ is linear in the first integral and convex in the second integral, we have  $\Phi''(t)<0$ for  $t\in (0,1)$. Therefore we have $\Phi(1)<\Phi(0)$. We reach a contradiction.  Hence, \eqref{mono} holds.

\vskip 15pt

\noindent{\it Step 4:} We finish the proof by an induction on the dimensions, 
using the results in the first three steps. 
By the classical Sobolev inequality in real dimension 2, i.e., complex dimension $1$ and Lemma \ref{BMS-1}, \eqref{BMS} holds in
complex dimension $2$. Note that the constant depends on the upper bound of the domain. 
Then by Step 2, we have Sobolev inequality for any ball in $\mathbb C^2$. The Sobolev inequality for general smooth strictly pseudo-convex domain $\Omega\subset\mathbb C^2$
follows by Step 3. By induction in this way, we obtain the Sobolev inequality in all dimensions.
\hfill$\square$

\begin{rem}\label{wwMT}
As with the k-Hessian equation \cite {TiW},
a weak Moser-Trudinger type inequality can be obtained by using the Sobolev inequalities and Taylor's expansion as follows,
Let $C_{n,p}$ be the Sobolev constant in dimension $n$, i.e.,
$$\|u\|_{L^{p}(\Omega)}\leq C_{n,p}\cdot\|u\|_{\mathcal{PSH}_0(\Omega)}.$$
Equivalently, one has
$$\int_\Omega\left(\frac{|u|}{\|u\|_{\mathcal{PSH}_0(\Omega)}}\right)^{p}\,d\mu\leq C_{n,p+1}^{p}.$$
By checking the proof of Theorem \ref{sobolev}, we have $C_{n, p}\leq \tilde C \cdot C_{n-1,p}$
for some constant $\tilde C$ independent of $p$.
Hence, by Taylor's expansion,  there exists $\alpha>0$ such that
\beq\label{wmt}
\int_\Omega e^{\alpha\frac{-u}{\|u\|_{\mathcal{PSH}_0(\Omega)}}}\,d\mu
=\int_\Omega \sum_{j=1}^\infty \frac{1}{j!}\left(\alpha\frac{-u}{\|u\|_{\mathcal{PSH}_0(\Omega)}}\right)^j\,d\mu\leq C
\eeq
for some $C>0$.
Inequality \eqref{wmt} was recently obtained for plurisubharmonic functions with finite pluricomplex energy in \cite{Ce}.
\end{rem}

\begin{rem}\label{proof2}
As  mentioned in the introduction, 
there is a simple proof for the Sobolev type inequality  \eqref{up}
if we use Ko\l{}odziej's $L^\infty$-estimate. 
In fact, to prove  \eqref{up}, Steps 1, 3 and 4 in Section 2 are not needed 
if we use Ko\l{}odziej's estimate \cite{K98}. 
The purpose of Steps 1, 3 and 4 is to prove \eqref{BMS1}, 
which can be replaced by Ko\l{}odziej's $L^\infty$ estimate.
More precisely, 
% let $\Om$ be a pesudo-convex domain with smooth boundary and let $J$ be the functional given in \eqref{F-1}.
in Step 2, we use the  gradient flow argument  \eqref{PE}.
It suffices to  prove $\lambda\geq c>0$ in \eqref{DP1d}  for some $c>0$ independent of $M$. 
Note that \eqref{BMS1} was used in \eqref{computation} only, 
which can be replaced by Ko\l{}odziej's $L^\infty$ estimate as follows:
\begin{eqnarray}\label{computation1}
\|v_{\delta}\|_{L^{\infty}} 
& \le& C\|\lambda \beta_{\delta}f_{\delta}\|_{L^{1+\eps}(\Om)}^{\frac{1}{n}} \\
&  \le& C\lambda^{\frac{1}{n}}\beta_{\delta}^{\frac{1}{n}}\left(\int_\Om (|v_{\delta}|+\delta)^{(1+\eps)p}\right)^{\frac{1}{(1+\eps)n}} \nonumber \\
& \le& C\lambda^{\frac{1}{n}}\beta_{\delta}^{\frac{1}{n}}\left( 
\int_\Om (|v_{\delta}|+\delta)^{p+1}\right)^{\frac{p}{n(p+1)}}\cdot |\Om|^{\frac{1-p\eps}{1+p}} \nonumber  \\
& \le & C\lambda^{\frac{1}{n}}\|v_{\delta}\|_{L^{p+1}}\cdot |\Om|^{\frac{1-p\eps}{1+p}}  . \nonumber 
\end{eqnarray}
This implies $\lambda\ge c>0$.  
In \eqref{computation1}, Ko\l{}odziej's $L^\infty$ estimate was used in the first inequality.
In the last inequality,  we have used the estimate $\beta_{\delta}\le C\|v_{\delta}\|_{L^{p+1}}^{n-p}$,
which is due to \eqref{beta1}. 
Hence \eqref{up} was proved for the case $p>n$. The case $p\le n$ follows. by H\"older's inequality.

The second  and third authors of the paper knew the above proof of \eqref{up}  many years ago,
as the proof uses the argument of the third author in \cite{W1}, see also  \cite{W2}.
The proof is in fact quite simple if one is familiar with the argument in \cite{W1}.
The proof given in this paper avoids Ko\l{}odziej's estimate \cite{K98},
for the purpose to provide a PDE proof for the a priori estimates in \cite {WWZ}.
\end{rem}

\vskip 10pt

\section{Brezis-Merle type inequality}

 When studying the Laplace equation 
\begin{equation}\label{Lap-d}
	\begin{cases}
		-\triangle u= f &  \ \text{in $\Omega$,}\\
		u=0 & \ \text{on $\partial\Omega$}
	\end{cases}
	\end{equation}
 with $f\in L^1(\Omega)$ and $\Om\subset \R^2$, Brezis and Merle obtained the following  inequality  \cite{BM}, 
\beq\label{laplace}
\int_\Omega e^{\frac{-(4\pi-\delta)u}{\|f\|_{L^1(\Omega)}}}\,dx\leq \frac{4\pi^2}{\delta}(\text{diam} (\Omega))^2, \
\ \delta\in (0,4\pi),
\eeq
where $u$ is a solution to \eqref{Lap-d}. 
In this section, we study the Brezis-Merle type inequality in high dimensions and the complex Monge-Amp\`ere equation when the right hand term is in the Lorenz-Zygmumd spaces.
First, we have
\begin{lem}\label{WBM}
	Let $\Omega$ be a bounded, smooth, pseudo-convex domain in $\C^n$,
	and $u\in \mathcal{PSH}_0(\Omega)\cap C^{\infty}(\bar \Omega)$.
	Then there exists a constant $\alpha>0$ such that 
	\begin{align}\label{weak-BM}
	\int_\Omega e^{\alpha(-u)}\leq C,
	\end{align}
	if  $\mathcal{M}(u)=1$, 
	where $\alpha, C$ depends on $n$ and $\text{diam}(\Omega)$.
\end{lem}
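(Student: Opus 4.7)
The plan is to reduce the exponential integrability estimate to a distribution-function bound
$$|\{u<-t\}| \leq C e^{-\alpha t}, \qquad t > 0,$$
since then the layer-cake identity $\int_\Omega e^{\alpha'(-u)}\,dV = |\Omega| + \alpha'\int_0^\infty e^{\alpha' t}|\{u<-t\}|\,dt$ is finite for any $\alpha' < \alpha$, with constants depending only on $n$ and $\text{diam}(\Omega)$.

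To derive the distribution-function bound, I would combine two ingredients. First, a Chern--Levine--Nirenberg type capacity estimate
$$\mathrm{Cap}(\{u<-t\},\Omega) \leq \mathcal{M}(u)/t^n = 1/t^n,$$
obtained by integration by parts: the truncation $w = \max(u,-t)$ lies in $\mathcal{PSH}(\Omega)$ with $-t \leq w \leq 0$, so $w/t$ is a test function for the relative extremal problem defining $\mathrm{Cap}(\{u<-t\},\Omega)$, and its total Monge--Amp\`ere mass $\int_\Omega (dd^c(w/t))^n$ is at most $t^{-n}\mathcal{M}(u)$.

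Second, a capacity--volume comparison obtained from the Sobolev type inequality (Theorem \ref{sobolev}). For a compact $E \subset \Omega$, the relative extremal function $h = h^*_{E,\Omega}$ belongs to $\mathcal{PSH}_0(\Omega)$, takes values in $[-1,0]$, equals $-1$ on the (fine) interior of $E$, and has $\mathcal{M}(h) = \mathrm{Cap}(E,\Omega)$. Therefore $\mathcal{E}(h) \leq \mathrm{Cap}(E,\Omega)/(n+1)$, and applying Theorem \ref{sobolev} to smooth approximations of $h$ and passing to the limit yields
$$|E| \leq \int_\Omega |h|^p\,dV \leq C_p^{\,p}\,\mathrm{Cap}(E,\Omega)^{p/(n+1)}.$$
Optimizing over $p$ using the $\sqrt{p}$-growth of the Sobolev constant (inherited from Step 2 of Section 2 via Remark \ref{wwMT}) converts the polynomial capacity bound into exponential volume decay. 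Applied to $E = \{u<-t\}$ with $\mathrm{Cap}(E) \leq t^{-n}$, this produces the desired bound $|\{u<-t\}| \leq C e^{-\alpha t}$.

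The main obstacle will be the capacity--volume conversion: for any fixed $p$ the Sobolev inequality gives only polynomial decay of $|\{u<-t\}|$, and obtaining exponential decay requires carefully tracking the $p$-dependence of the Sobolev constant through the induction argument of Section 2. A more PDE-native alternative, closer to the spirit of Section 2, is to apply Ko\l{}odziej's $L^\infty$ estimate (available via Remark \ref{proof2}) to a truncation of $\det(u_{i\bar j})$ and pass to the limit using the comparison principle; this would bypass pluripotential theory at the cost of invoking Ko\l{}odziej's estimate as a black box.
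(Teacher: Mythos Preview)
Your route is different from the paper's and considerably more involved. The paper does not pass through capacity, extremal functions, or a distribution-function argument at all. Instead it proceeds in two short steps.

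\emph{Ball case.} Inequality \eqref{BMS} of Lemma~\ref{BMS-1} already controls $\|u\|_{L^p(\Omega)}$ by the Monge--Amp\`ere \emph{mass} $\mathcal M(u)^{1/n}$ (not the energy $\mathcal E(u)$), with a constant whose $p$-growth is inherited from the $(n-1)$-dimensional Sobolev constant together with the two-dimensional Brezis--Merle bound used in the last line of that lemma's proof. Since the hypothesis of Lemma~\ref{WBM} is precisely $\mathcal M(u)=1$, one simply Taylor-expands $e^{\alpha(-u)}$ and sums.

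\emph{General domain.} Extend $\Psi=\det(u_{i\bar j})$ by zero to a ball $B\supset\Omega$, mollify to $\Psi_k$, solve $(dd^c v_k)^n=\Psi_k$ on $B$ with zero boundary data, pass to the limit $v$, and apply the comparison principle $u\ge v$ on $\Omega$ to transfer the ball estimate for $v$ back to $u$.

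The simplification you are missing is that \eqref{BMS}---not Theorem~\ref{sobolev}---is the natural input here: it already compares $L^p$ to mass, so there is no need to route through $\mathcal E$, and hence no need for the relative extremal function $h^*_{E,\Omega}$ of a sublevel set. What your approach buys is a clean decomposition into ``capacity of sublevel sets decays'' plus ``capacity controls volume,'' which is the standard pluripotential template; but the paper explicitly aims to avoid pluripotential machinery at this point (see the sentence immediately following the proof of Lemma~\ref{WBM}), and your plan reintroduces it through $h^*_{E,\Omega}$, its smooth $\mathcal{PSH}_0$ approximation, and the monotonicity of total Monge--Amp\`ere mass. Your alternative via Ko\l odziej's $L^\infty$ estimate is likewise not what the paper does---that estimate is mentioned only in Remark~\ref{proof2} as a shortcut the authors deliberately bypass. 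Finally, the ``main obstacle'' you anticipate (tracking the $p$-dependence of $C_p$ through the induction) is exactly the work the paper has already packaged into Lemma~\ref{BMS-1} and Remark~\ref{wwMT}, so by citing \eqref{BMS} directly the paper absorbs that difficulty rather than re-deriving it through a capacity--volume optimisation.
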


	\begin{proof}
		If $\Omega$ is a ball, 
		\eqref{weak-BM} follows from \eqref{BMS} and Taylor's expansion.
		For general bounded, smooth, pseudo-convex domain $\Omega\subset\Bbb C^n$ and
		$u\in \mathcal{PSH}_0(\Omega)\cap C^{\infty}(\bar \Omega)$ with $\mathcal{M}(u)=1$,
		we define  
		\begin{align*}
		\Psi:=\begin{cases}
		\det(u_{i\bar{j}})\ \ \ &\text{on}\ \bar\Omega,     \\
		0\ \ \ &\text{on}\ B\setminus\bar\Omega ,
		\end{cases}
		\end{align*}
		where $B$ is a ball containing $\Om$.
		Let $\Psi_k$ be a sequence of smooth, monotone decreasing approximation of $\Psi$,  such that 
		$$ \|\Psi_k-\Psi\|_{L^2(B)}\leq e^{-k}$$
		and $\Psi_k$ converges uniformly to $\Psi$ in $\overline\Om$.
		Let $v_{k}$ be the solution to
		\begin{align*}
		\begin{cases}
		(dd^cv_{k})^n=\Psi_k\,d\mu\ \ \ &\text{in}\ B,  \\
		v_{k}=0\ \ \ &\text{on}\ \p B.
		\end{cases}
		\end{align*}
		Note that by \cite{CP}, $\sup_{B}|v_{k}|$ is uniformly bounded by $\|\Psi\|_{L^2(\Omega_2)}$ and $\displaystyle\lim_{k\to \infty}\int_{B}\Psi_k=1$. By \eqref{weak-BM}, the limit  $v=\displaystyle\lim_{k\to\infty} v_k$ satisfies
		$$\int_B e^{\alpha(-v)}=\lim_{k\to \infty}\int_B e^{\alpha\frac{(-v_k)}{\mathcal{M}(v_k)}}\leq C.$$ 
		Hence by the comparison principle,
		$$\int_\Omega e^{\alpha(-u)}\leq \int_\Omega e^{\alpha(-v)}\leq \int_B e^{\alpha(-v)}\leq C.$$
\vskip-20pt	\end{proof}

We point out that a stronger version of \eqref{weak-BM} was proved in \cite{BB},
where the authors proved the following Brezis-Merle type inequality,
\beq\label{BMeq}
\int_\Omega e^{-nu}\,d\mu\leq A(1-\mathcal M(u))^{-1}
\eeq
if  $\mathcal M(u)<1$.
Here $A>0$ is a constant 
and $\Omega$ is a smooth strictly pseudo-convex domain in $\mathbb C^n$ with $n>1$. 
\eqref{BMeq} implies  the following Brezis-Merle  type inequality
\begin{equation}\label{quasiBM}
\int_\Omega e^{-(n-\delta)u}\,d\mu\leq A\delta^{-(n-1)}
\end{equation}
for $u\in \mathcal{PSH}_0(\Omega)\cap L^\infty(\Omega)$ with $\mathcal M(u)=1$.

Inequalities  \eqref{weak-BM}  and \eqref{quasiBM}  generalize inequality \eqref{laplace} to higher dimensions.
 They can be seen as an analogue of Tian's $\alpha$-invariant \cite{Ti} in domain case, 
 since in a fixed K\"ahler class, the complex Monge-Amp\`ere mass is a constant.  A different proof to \eqref{quasiBM}  is given in \cite{AZ}. 
Here we provide a proof of the weaker form  \eqref{weak-BM} to make the proof of the paper self-contained, 
avoiding the use of the pluri-potential theory.

With Lemma \ref{WBM}, we consider the complex Monge-Amp\`ere equation 
when $f$ is in the Lorenz-Zygmumd spaces. Recall that the Lorenz-Zygmumd space is defined by
\beq\label{LZ}
L^1(\log L)^q(\Omega):=\Big\{f\ |\ \int_\Omega |f|(\log(1+|f|))^q \,dx<\infty\Big\}.
\eeq
It is known that for the Laplace equation 
\eqref{Lap-d} in dimension two, the $L^\infty$-estimate holds when the right hand term $f\in L^1(\log L)^1$. 
For the complex Monge-Amp\`ere equation, it is known that the $L^\infty$-estimate holds 
when $f\in L^1(\log L)^q(\Omega)$ for some $q>n$  \cite{K98}. 
In the following we establish an integral estimate for the case $0<q\leq n$.

For convenience, we denote $A_f=\int_\Omega |f|[\log(1+|f|)]^q \,dx$  for $f\in L^1(\log L)^q$,
and denote by $\mathcal F_q(\Omega)$ the set of pluri-subharmonic functions $u$ for which there exists a sequence of smooth pluri-subharmonic functions $\{u^j\} $ vanishing on $\pom$, such that $u^j\searrow u$ and $\sup_jA_{f_j}<+\infty$, where $f_j=\det(u^{j}_{k\bar l})$.
We have the following generalized Brezis-Merle typed inequalities.
\begin{theo}\label{BMq}
Let $u\in \mathcal F_q(\Omega)$.
\begin{enumerate}
\item  If $0<q<n$, and $\beta=\frac{n}{n-q}$, then there exists $\delta=\delta(\alpha)>0$, where $\alpha$ is defined by \eqref{weak-BM}, and $C=C(\alpha, \beta, A_f)>0$ such that
\beq\label{BMdomain}
\int_\Omega e^{\delta (-u)^\beta} \leq C;
\eeq

\item If $q\ge n$, then  \eqref{BMdomain} holds for any $ \delta, \beta>0$, where $C= C(\alpha, \beta, A_f)>0$.
\end{enumerate}
\end{theo}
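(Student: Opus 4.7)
The plan is to reduce the exponential integral bound to a tail estimate on the distribution function $\psi(t):=-\log|\{u<-t\}|$, derive a functional recursion for $\psi$ by combining Lemma~\ref{WBM} applied on sub-level sets with the Lorentz--Zygmund control on $f=\det(u_{i\bar j})$, and then bootstrap the recursion to the required power $\beta$. By the definition of $\mathcal F_q(\Om)$ I reduce to $u\in\mathcal{PSH}_0(\Om)\cap C^\infty(\bar\Om)$ with smooth positive $f$. For a.e.\ $t>0$ (Sard's theorem) the sub-domain $\Om_t:=\{u<-t\}$ is smooth, and sub-level sets of p.s.h.\ functions are pseudo-convex with diameter bounded by $\mathrm{diam}(\Om)$, so Lemma~\ref{WBM} applies to $u+t\in\mathcal{PSH}_0(\Om_t)$ with constants uniform in $t$.

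Applying Lemma~\ref{WBM} to $u+t$ on $\Om_t$ after rescaling by $\mathcal M_t^{-1/n}$ (where $\mathcal M_t:=\int_{\Om_t}f$) and Chebyshev on the subset $\Om_{2t}\subset\Om_t$ where $-(u+t)\ge t$ gives
\begin{equation*}
|\Om_{2t}|\le C\, e^{-\alpha t\mathcal M_t^{-1/n}}.
\end{equation*}
On the other hand, for every $K>1$ the Lorentz--Zygmund condition yields
\begin{equation*}
\mathcal M_t=\int_{\Om_t}f\le K|\Om_t|+\int_{\{f>K\}}f\le K e^{-\psi(t)}+\frac{A_f}{\log^q(1+K)}.
\end{equation*}
Taking $K=e^{\psi(t)/2}$ balances the two terms, so once $\psi(t)$ exceeds a threshold $T_0=T_0(A_f,q)$ I obtain $\mathcal M_t\le C_1/\psi(t)^q$. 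Combined with the previous display, this gives the key recursion
\begin{equation*}
\psi(2t)\ge c_2\, t\,\psi(t)^{q/n}-C_3,\qquad t\ge T_0,
\end{equation*}
with $c_2,C_3$ depending on $n,q,\alpha,A_f$.

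The core of the argument is to bootstrap this recursion into a pointwise bound $\psi(t)\ge c_\ast t^\beta-C_\ast$. The base case is the linear bound $\psi(t)\ge c_0 t-C_0$ obtained by applying Lemma~\ref{WBM} directly to $u$ on $\Om$ (using $\mathcal M(u)\le |\Om|+A_f/(\log 2)^q$). In part (1), the affine map $x\mapsto 1+qx/n$ has unique fixed point $\beta=n/(n-q)$, and the self-similar ansatz $\psi(t)\ge c_\ast t^\beta$ fed into the recursion is consistent for any $c_\ast$ below the value fixed by $1+\beta q/n=\beta$. A dyadic induction running through the exponents $\beta_k=1+q\beta_{k-1}/n\nearrow\beta$, starting from $\beta_0=1$ and carefully tracking the coefficients, yields $\psi(t)\ge c_\ast t^\beta-C_\ast$. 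In part (2) one has $q/n\ge1$, so iterating the recursion pushes $\beta_k$ past any prescribed exponent after finitely many steps, giving the stronger conclusion.

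The pointwise bound on $\psi$ then yields the theorem via the layer-cake identity
\begin{equation*}
\int_\Om e^{\delta(-u)^\beta}=|\Om|+\int_0^\infty \delta\beta\, t^{\beta-1}e^{\delta t^\beta}|\Om_t|\,dt,
\end{equation*}
which is finite for any $\delta$ below $c_\ast$. I expect the main obstacle to be the bootstrap bookkeeping: one must follow the sequence of coefficients $c_k$ through the infinite iteration and verify they stay bounded below by a strictly positive constant with the accumulated error terms $C_k$ remaining finite, so that a genuinely positive $\delta$ is achieved in the conclusion; the contraction rate $q/n<1$ is the essential feature enabling this in part (1). Technical issues such as the non-smoothness of $\Om_t$ for exceptional $t$ are dispatched by Sard's theorem together with the approximation built into the definition of $\mathcal F_q(\Om)$.
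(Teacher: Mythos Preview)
Your approach is correct and genuinely different from the paper's. The paper follows a barrier argument in the spirit of Chen--Cheng: it introduces an auxiliary function $v$ solving $(dd^cv)^n=f(\log(1+f))^q/A_f$ with zero boundary data, forms $G=\epsilon v+(-u)^\beta$, and applies the Alexandroff maximum principle to $G$ with respect to the linearized operator $u^{i\bar j}\partial_{i\bar j}$. On the set where the Alexandroff integrand is nonzero one has $\log(1+f)\lesssim(-u)^{n(\beta-1)/q}$, so the integral is controlled by $\int_\Omega e^{c(-u)^{n(\beta-1)/q}}$; feeding in the bound already established for exponent $\beta_k$ and choosing $\beta$ so that $n(\beta-1)/q=\beta_k$ yields exactly the recursion $\beta_{k+1}=1+\tfrac{q}{n}\beta_k$ that you arrive at.

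Your route replaces the auxiliary Dirichlet problem and the Alexandroff estimate by a direct sub-level-set argument: Lemma~\ref{WBM} on $\Omega_t$ together with the $L^1(\log L)^q$ control on the mass $\mathcal M_t$ produces a functional recursion for $\psi(t)=-\log|\Omega_t|$, which you then bootstrap dyadically. This is more elementary (no second boundary-value problem, no maximum principle for the linearization), closer in spirit to Ko\l odziej's capacity iteration while staying purely volume-based, and makes the layer-cake structure explicit. The paper's approach, on the other hand, is a clean instance of the Chen--Cheng barrier technique and fits the paper's stated programme of proofs by classical PDE tools. One small point worth noting: in your scheme the constant $c_2$ in the recursion carries a factor $A_f^{-1/n}$, so the limiting coefficient $c_\ast=(c_2/2^\beta)^\beta$, and hence the admissible $\delta$, depends on $A_f$; the paper normalizes $v$ to have total mass~$1$, which is the mechanism by which it tries to make $\delta$ depend only on $\alpha$. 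For the application in Section~4 this is immaterial, since $A_f$ is bounded uniformly along the approximating sequence there.
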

\begin{proof}
	        The proof  uses an idea from \cite{CC}.
		We will prove the case $0<q<n$ only, as the proof for the case $q\ge n$ is similar. 
		By \eqref{weak-BM},  \eqref{BMdomain} holds for $\beta<1$ and $\delta>0$. 
		We will use an iteration argument to show that \eqref{BMdomain} holds for $\beta\le \frac{n}{n-q}$.
		
		By definition, it suffices to consider the function $u\in\mathcal F_q(\Omega)\cap C^2(\Omega)$ such that
		\beq\label{CMA0}
		\begin{cases}
			(dd^cu)^n= f & \ \text{in $\Omega$,}\\
			u=0 & \ \text{on $\partial\Omega$.}
		\end{cases}
		\eeq
and $f\in L^1(\log L)^q(\Omega)$.
		Let  $v$ be the solution to
		\beq\label{CMA1}
		\begin{cases}
			(dd^cv)^n=\frac{ f(\log(1+f))^q }{A_f}& \ \text{in $\Omega$,}\\
			v=0 & \ \text{on $\partial\Omega$.}
		\end{cases}
		\eeq
		Let $ G=\epsilon v+(-u)^\beta$, $\beta\geq 1$.  
		We assume $G\geq 0$ on $\Omega$, otherwise, we can restrict to 
		the subdomain  $\{G\geq 0\}$.
		By direct computation
		\begin{eqnarray*}
			u^{i\bar j}G_{i\bar j}&=&\epsilon u^{i\bar j}v_{i\bar j}+u^{i\bar j}[-\beta (-u)^{\beta-1}u_{i\bar j}
			+\beta(\beta-1)(-u)^{\beta-2}u_iu_{\bar j}]\\[5pt]
			&\geq & n\epsilon\Big[\frac{(\log(1+f))^q}{A_f}\Big]^{\frac{1}{n}}-n\beta (-u)^{\beta-1}.
		\end{eqnarray*}
		By Alexandroff's maximum principle, 
		\begin{eqnarray*}
			\sup_{\Omega} G\leq \sup_{\partial\Omega} G
			+C_n\Big(\int_{\Omega}
			\frac{\Big\{\big[ n\epsilon \big(\frac{(\log(1+f))^q}{A_f}\big)^{\frac{1}{n}}-n\beta (-u)^{\beta-1}\big]^-\Big\}^{2n}}{f^{-2}}\Big)^{\frac{1}{2n}},
		\end{eqnarray*}
		where  $[a]^-:=\max\{-a,\,0\}$.
		The integrand is nonzero only if 
		$$n\epsilon \Big(\frac{(\log(1+f))^q}{A_f}\Big)^{\frac{1}{n}}-n\beta (-u)^{\beta-1}<0,$$
		i.e., 
	$$1+f\leq Ce^{\left(\frac{\beta}{\epsilon}\right)^{\frac{n}{q}}\cdot(-u)^{\frac{n(\beta-1)}{q}}}.$$   
		Let $\omega=\big\{z\in\Omega\  |\  n\epsilon \big(\frac{(\log(1+f))^q}{A_f}\big)^{\frac{1}{n}}-n\beta (-u)^{\beta-1}<0\big\}$.
		Note that $G=0$ on $\partial\Omega$. 
		Hence,
		\begin{eqnarray}\label{it-formula}
		\sup_{\Omega} G  &\leq & C_n\Big( \int_\omega(-u)^{2n(\beta-1)}e^{2\left(\frac{\beta}{\epsilon}\right)^{\frac{n}{q}}\cdot(-u)^{\frac{n(\beta-1)}{q}}}\Big)^{\frac{1}{2n}} \nonumber \\[5pt]
		&\leq & C_n\Big( \int_{\Omega}(-u)^{2n(\beta-1)}e^{2\left(\frac{\beta}{\epsilon}\right)^{\frac{n}{q}}\cdot(-u)^{\frac{n(\beta-1)}{q}}}\Big)^{\frac{1}{2n}}.
		\end{eqnarray}   
		Hence, 
		when $\frac{n(\beta-1)}{q}<1$, i.e., when $\beta<\beta_1:=1+\frac{q}{n}$, 
		by \eqref{weak-BM}, we have
		$\sup_{\Omega} G\leq C$; when $\frac{n(\beta-1)}{q}=1$, we choose $\eps>0$ such that $2\left(\frac{\beta}{\epsilon}\right)^{\frac{n}{q}}\le \alpha$, i.e. $\eps\ge \frac{\beta}{(\frac{1}{2}\alpha)^{\frac{q}{n}}}$, where $\alpha$ is defined in \eqref{weak-BM}, we also have $\sup_{\Om}G\le C$.  This implies that 
		for sufficiently $\epsilon>0$ and $\beta\le \beta_1$, 
		$$\epsilon v+ (-u)^\beta\leq C$$
		for some $C>0$. Applying \eqref{weak-BM} to $v$ again, we can choose $\delta$ sufficiently small such that
	\begin{align}\label{beta-k}
	\int_\Omega e^{\delta(- u)^\beta} \leq C\int_\Omega e^{ -\delta \cdot \epsilon v}\leq C
	\end{align}
		for any $\beta\le \beta_1$.

Now we are going to prove inequality \eqref{BMdomain}. 
Let $\beta_{k+1}:=1+\frac{q}{n}\beta_k$, and $\beta_0=1$. Since \eqref{beta-k} have been established for $\beta\le \beta_1$, we repeat the proof and use \eqref{beta-k} in \eqref{it-formula}, which implies that the inequality \eqref{beta-k} holds for $\beta\le \beta_2$.
Once \eqref{beta-k} is established for $\beta\le\beta_k$, where $k\ge 1$, we can repeat the proof again and use \eqref{beta-k} instead of \eqref{weak-BM} in \eqref{it-formula}, and thus \eqref{beta-k} holds for $\beta<\beta_{k+1}$.  Note that when $0<q<n$, the choice of $\delta$ in \eqref{beta-k} is independent of $k$, since we can choose $\eps\ge \frac{n}{(n-q)(\frac{1}{2}\alpha)^{\frac{q}{n}}}$ and  $\delta\le \frac{\alpha}{\eps}\le C(n,q)\alpha^{1+\frac{q}{n}}$. Hence, by the iteration argument, it can be improved for $\beta\le \displaystyle\lim_{k\to \infty}\beta_k=\frac{n}{n-q}$.
%From the argument, we see once the inequality is proved for $\beta_k$, 
%it can be improved for $\beta\le\beta_{k+1}:=1+\frac{q}{n}\beta_k$. In conclusion, we have the theorem holds for 
%$\beta\le \frac{n}{n-q}$. 
%(This paragraph seems not very clear)}
	\end{proof}

%In the real setting, the extension can be considered through $n$-Laplace equation  $$\triangle_n u=div(|\nabla u|^{n-2}\nabla u)$$  in dimension $n$. As shown in  \cite{BPV} the $L^\infty$-estimate holds when  $f\in L^1(\log L)^{n-1+\epsilon}$ and may fail if  $f\in L^1(\log L)^{n-1}$. Aguilar-Peral obtained similar inequalities  in dimension $n\geq 3$ and $0\leq q< n-1$ as \eqref{laplace}, while $\triangle u$ is replaced by $n$-Laplacian \cite{AP}. There is also a similar estimate for $q=n-1$ obtained in \cite{I09}. Combining them together, the results can be stated as follows:  Let $0\leq q\leq n-1$ and 
%$$\beta:=\begin{cases}n\omega_n^{\frac{1}{n-1}},\ \ \ & 0\leq q\leq 1,\\
%\left(q\left(1-\frac{q-1}{n-2}\right)^{n-2}\right)^{\frac{1}{n-1}}n\omega_n^{\frac{1}{n-1}},\ \ \ & 1<q<n-1,
%\end{cases}$$
%where $\omega_n$ is the surface measure of the unit sphere in $\mathbb R^n$. Then for any $u$ satisfying
%$$\begin{cases}
%\triangle_n u= f & \ \text{in $\Omega$,}\\
%u=0 & \ \text{on $\partial\Omega$,}
%\end{cases}$$
%\begin{enumerate}
%\item if $0\leq q< n-1$, for $0\leq \alpha<\beta$, there exists uniform constant $C=C(\alpha,q,n)>0$ such that 
%\beq\label{n-lap1}
%\int_\Omega \exp{\left({\frac{-\alpha u}{\|f\|^{\frac{1}{n-1}}_{L^1(\log L)^q(\Omega)}}}\right)^{\frac{n-1}{n-1-q}}}\,dx\leq C|\Omega|;
%\eeq
%\item if $q= n-1$,
%\beq\label{n-lap2}
%\int_\Omega \exp\left(\exp\left({\frac{-\alpha u}{\|f\|_{L^1(\log L)^q(\Omega)}}}\right)^{\frac{n-1}{n-2}}\right)\,dx\leq C(\alpha, q, n).
%\eeq
%\end{enumerate}

\section{Proof of Theorem \ref{mt}}
In this section, we prove the Moser-Trudinger type inequality \eqref{mt} by means of Theorem \ref{BMq}.

\begin{theo}
	Let $\Omega$ be a bounded, smooth, pseudo-convex domain in $\mathbb{C}^n$.
	There exist positive constants $\alpha,\ C>0$, depending only on $n$  and $\text{diam}(\Omega)$, 
	such that  $\forall\ u\in \mathcal{PSH}_0(\Omega)\cap C^{\infty}(\bar \Omega)$, $ u\not\equiv 0$,
	we have the inequality
	\beq\label{MT}
	\int_\Omega e^{ \alpha \left(\frac{- u}{\|u\|_{\mathcal{PSH}_0(\Omega)}}\right)^{\frac{n+1}{n}}} \leq C . 
	\eeq
\end{theo}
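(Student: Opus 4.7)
My plan is to argue by contradiction, following the gradient-flow template of Step 2 in Section 2 but using the Brezis--Merle inequality (Theorem \ref{BMq}) in place of the Sobolev inequality. Assume \eqref{MT} fails, and fix a large truncation height $M>0$. Set
\[
f_M(t)=\alpha\,\tfrac{n+1}{n}\,t^{1/n}e^{\alpha t^{(n+1)/n}}\quad\text{for }0\le t\le M,
\]
extended smoothly and monotonically so as to remain bounded for $t>M$, and put $F_M(t)=\int_0^t f_M(s)\,ds$. Consider the functional
\[
J_M(u)=\mathcal E(u)-\lambda\int_\Om F_M(-u)\,dV.
\]
The failure of \eqref{MT} lets me choose $\lambda>0$ arbitrarily small (with $M=M(\lambda)$ correspondingly large) whilst still finding an approximate minimizer $w_\eps$ with $J_M(w_\eps)<-1$. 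Running the descent gradient flow
\[
u_t-\log\det(u_{i\bar j})=-\log[\lambda f_M(-u)+\delta],\qquad u|_{\p\Om}=0,\qquad u(\cdot,0)=w_\eps,
\]
(with the standard non-degeneracy $\delta>0$ and compatibility adjustments of $w_\eps$ near $\p\Om$ as in \S 2), Theorem \ref{para-exist} produces a global smooth solution along which $J_M$ is non-increasing. A subsequence $t_j\to\infty$, and then $\delta\to 0$, yields a limit $u_\infty\in C^{1,1}(\bom)\cap C^3(\Om)$ solving the Euler--Lagrange equation
\[
(dd^c u_\infty)^n=\lambda f_M(-u_\infty)\ \text{in }\Om,\qquad u_\infty|_{\p\Om}=0,\qquad J_M(u_\infty)\le -1.
\]

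Now I apply Theorem \ref{BMq} to $u_\infty$ with $f:=\det(u_{\infty,i\bar j})=\lambda f_M(-u_\infty)$. On $\{-u_\infty\le M\}$, $\log(1+f)\le \alpha(-u_\infty)^{(n+1)/n}+C(\alpha,\lambda)$. Choosing $q=\tfrac{n}{n+1}$, so that Theorem \ref{BMq}(1) produces exactly the critical exponent $\tfrac{n}{n-q}=\tfrac{n+1}{n}$, together with the identity $(n+1)\mathcal E(u_\infty)=\lambda\int_\Om(-u_\infty)f_M(-u_\infty)\,dV$ that follows from the definition of $\mathcal E$ and the Euler--Lagrange equation, gives
\[
A_f\le C\,\alpha^{q}\int_\Om f_M(-u_\infty)(-u_\infty)\,dV=C\,\alpha^{q}\,\tfrac{n+1}{\lambda}\,\mathcal E(u_\infty).
\]
A bootstrap starting from the weak estimate \eqref{weak-BM} and iterating Theorem \ref{BMq}, each step feeding the current exponential integrability of $-u_\infty$ back into an improved bound on $A_f$ and producing in turn a sharper exponent, is designed to yield an $M$-independent bound $\int_\Om e^{\delta(-u_\infty)^{(n+1)/n}}\le K$. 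Hence $\int_\Om F_M(-u_\infty)\le K'$, and combining with $J_M(u_\infty)\le -1$ and $\mathcal E(u_\infty)\ge 0$ forces $\lambda K'\ge 1$, i.e.\ $\lambda\ge 1/K'$; choosing $\lambda<1/K'$ at the outset produces the desired contradiction.

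The main obstacle is the Brezis--Merle bootstrap that yields an $M$-uniform bound on $\mathcal E(u_\infty)$ and hence on $A_f$ at the critical exponent. The sequence of exponents generated by repeated application of Theorem \ref{BMq} must converge precisely to $(n+1)/n$ (requiring $q\to n/(n+1)$ in the hypothesis), the Alexandrov-type constants arising in its proof must not accumulate through the iteration, and the final constant $K'$ must depend only on $n$, $\alpha$, and $\operatorname{diam}(\Om)$ while having a controlled dependence on $\lambda$ so that $\lambda K'\ge 1$ really does fail for small $\lambda$. Matching the parameters $\alpha$, $\lambda$, and $q$ so the iteration closes at the sharp exponent is the delicate point of the argument.
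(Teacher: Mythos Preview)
Your overall architecture—produce a critical point via a descent gradient flow, then invoke Theorem~\ref{BMq} at the borderline exponent $q=\tfrac{n}{n+1}$ so that $\tfrac{n}{n-q}=\tfrac{n+1}{n}$—is exactly the paper's. The essential difference, and the genuine gap, lies in the choice of functional.

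The paper does \emph{not} work with the additive functional $J_M(u)=\cE(u)-\lambda\int_\Om F_M(-u)$; it maximizes the quotient-type functional
\[
\F_{m,\delta,\eta}(u)=\int_\Om F_m^\delta\Bigl(\tfrac{-u}{\eta(\|u\|)}\Bigr),\qquad \eta(t)=e^{t-1}.
\]
The sole purpose of the weight $\eta$ is Lemma~\ref{norm}: because $t\mapsto t/\eta(t)$ has a strict maximum at $t=1$, every near-maximizer of $\F_{m,\delta,\eta}$ is forced to have $\|u_{m,\delta}\|=1$, hence $\cE(u_{m,\delta})=1$. The Euler--Lagrange equation then carries a Lagrange multiplier
\[
\lambda=\frac{(n+1)\cE(u)}{\int_\Om(-u)f_m^\delta(-u)},
\]
so that $\lambda\int_\Om(-u)f_m^\delta(-u)=n+1$ is an \emph{absolute constant}. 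Combined with $\log(1+g_{m,\delta})\le C+\alpha(-u)^\beta$ and $\beta q=1$, this gives a bound on $A_{g_{m,\delta}}$ depending only on $n$ and $\alpha$, after which a single application of Theorem~\ref{BMq} finishes.

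In your setup $\lambda$ is a free parameter, not a multiplier, and the identity you quote reads $(n+1)\cE(u_\infty)=\lambda\int_\Om(-u_\infty)f_M(-u_\infty)$; so $A_f$ is controlled only by $\cE(u_\infty)$, which nothing bounds uniformly in $M$. The bootstrap you sketch,
\[
A_f\le C\,\cE(u_\infty)\le C\lambda\int_\Om F_M(-u_\infty)\le C\lambda\,K(A_f),
\]
would close only if the dependence $K(A_f)$ of the constant in Theorem~\ref{BMq} on $A_f$ were at worst linear; but in the Alexandrov step of that proof the contact set is characterized by $\log(1+f)<A_f^{1/q}(\beta/\eps)^{n/q}(-u)^{n(\beta-1)/q}$, so $A_f$ sits inside an exponential and $K(A_f)$ is far from tame. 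The missing device is precisely the weight $\eta$ together with Lemma~\ref{norm}, which pins $\|u\|=1$ a priori and eliminates the circularity outright.
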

\noindent{\it Proof.}
Let $m\ge n$, $q=\frac{n}{n+1}$, $\beta=\frac{n}{n-q}=\frac{n+1}{n}$ and $\alpha<\delta$, where $\delta$ is defined in Theorem \ref{BMq}. 
Note that $\beta q=1$. We denote $\|u\|=\|u\|_{\mathcal{PSH}_0(\Omega)}$ for simplicity.
To derive the inequality, we  consider the approximation
$$\F_m(u,\Om):=\int_\Om F_m\left(\frac{-u}{\|u\|}\right),$$
where $$F_m(t):=\sum_{j=n}^m\frac{\alpha^j}{j!}t^{j\beta},\ \ \ f_m(t):=F_m'(t)=\beta\sum_{j=n}^m\frac{\alpha^j}{(j-1)!}t^{j\beta-1}.$$
By the Sobolev inequality, $Y_m(\Omega) :=\sup\{\F_m(u, \Omega)\}<+\infty$ for each $m$. 
It suffices to prove that $Y_m(\Omega)$ is uniformly bounded for $m$.

Similarly as in Section 2, we consider the modified functional
\beq\label{mf}
\F_{m,\delta,\eta}(u):=\int_\Om F^\delta_m\left(\frac{-u}{\eta(\|u\|)}\right),
\eeq
where $F_m^\delta(t)= F_m(t)+\delta t$, $f_m^\delta(t)=f_m(t)+\delta$ and $\eta(t)=e^{t-1}$. 
Then by direct computations,  the gradient flow of $\F_{m,\delta,\eta}$ is
\begin{align}\label{quoti-grad}
	\begin{cases}
		u_t=\log \det (u_{i\bar{j}})-\log \{\lambda f^\delta_m\left(\frac{-u}{\eta(\|u\|)}\right)\},  \\[4pt]
		u(x,0)=u_0,
	\end{cases}
\end{align}
where for any $u_0\in C^\infty(\Om)\cap\mathcal{PSH}_0(\Om)$, and
$$\lambda=\frac{(n+1)\cE(u)}{\int_\Om (-u)f^\delta_m\left(\frac{-u}{\eta(\|u\|)}\right)}.$$

By the a priori estimates in Theorem \ref{para-estimate}, 
There is a long time solution to  equation \eqref{quoti-grad}, 
which converges to a smooth maximizer $u_{m,\delta}$ of 
$Y_{m,\delta,\eta}(\Omega)=\sup\{\F_{m,\delta,\eta}(u)\}$,
and satisfies 
\beq\label{quoti-ell}
\begin{cases}
	\det (u_{i\bar j})=\lambda f^\delta_m\left(\frac{-u}{\eta(\| u\|)}\right)=:g_{m,\delta}
	& \text{in $\Omega$,} \\[5pt]
	u=0  & \text{on $\partial \Omega$}.
\end{cases}
\eeq
By the following lemma, we have furthermore $\|u_{m,\delta}\|=1$.

\begin{lem}\label{norm}
Denote 
\begin{align*}
\Theta^*(\eps):=&\sup\{\|u\|
			\,\big|\,u\in \mathcal{PSH}_0(\Omega),\,\F_{m,\delta,\eta}\ge Y_{m,\delta,\eta}-\eps\},   \\
\Theta_*(\eps):=&\inf\{\|u\|\,\big|\,u\in \mathcal{PSH}_0(\Omega),\,\F_{m,\delta,\eta}\ge Y_{m,\delta,\eta}-\eps\}.  
\end{align*}
Then 
$$\Theta^*(\eps),\,\Theta_*(\eps)\to 1,\ \ \ \text{as }\eps\to 0.$$
\end{lem}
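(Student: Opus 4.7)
The plan is to exploit the scaling monotonicity that the factor $\eta(t)=e^{t-1}$ was designed to produce. Writing $u=cv$ with $\|v\|=1$ in the definition of $\mathcal{F}_{m,\delta,\eta}$, the argument of $F_m^\delta$ becomes $\phi(c)(-v)$, where $\phi(c):=ce^{1-c}$. The function $\phi$ is continuous on $(0,\infty)$, vanishes at $0$ and at $+\infty$, and attains its unique maximum $\phi(1)=1$ at $c=1$. Since $F_m^\delta$ is nondecreasing on $[0,\infty)$, I immediately deduce
\[
\mathcal{F}_{m,\delta,\eta}(cv)\leq \mathcal{F}_{m,\delta,\eta}(v)\quad\text{for all }c>0,
\]
with strict inequality for $c\neq 1$. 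In particular, any competitor can be scaled to unit norm without decreasing the functional, so the supremum $Y_{m,\delta,\eta}$ is strictly positive, being bounded below by the value of $\mathcal{F}_{m,\delta,\eta}$ on any fixed nontrivial unit-norm test function (finiteness of $Y_{m,\delta,\eta}$ follows from the Sobolev inequality).

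Next I would establish a non-concentration estimate on the unit sphere: there exist $c_0,\epsilon_0>0$ such that every $v\in\mathcal{PSH}_0(\Omega)\cap C^\infty(\bar\Omega)$ with $\|v\|=1$ and $\mathcal{F}_{m,\delta,\eta}(v)\geq Y_{m,\delta,\eta}-\epsilon_0$ satisfies $\int_\Omega(-v)\geq c_0$. Arguing by contradiction, pick $v_k$ with $\|v_k\|=1$, $\int(-v_k)\to 0$, and $\mathcal{F}_{m,\delta,\eta}(v_k)\to Y_{m,\delta,\eta}$. Theorem \ref{sobolev} yields a uniform bound on $\|v_k\|_{L^{m\beta+1}}$, and Hölder interpolation between $L^1$ and $L^{m\beta+1}$ then forces $\|v_k\|_{L^{j\beta}}\to 0$ for every $j=n,\dots,m$. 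Since $F_m^\delta(t)=\sum_{j=n}^{m}\frac{\alpha^j}{j!}t^{j\beta}+\delta t$ is a finite polynomial in $t^{\beta}$ vanishing at $t=0$, this implies $\mathcal{F}_{m,\delta,\eta}(v_k)\to 0$, contradicting $Y_{m,\delta,\eta}>0$.

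With that in hand, I would finish as follows. Given near-maximizers $u_k$ with $\mathcal{F}_{m,\delta,\eta}(u_k)\geq Y_{m,\delta,\eta}-\epsilon_k$ and $\epsilon_k\to 0$, set $c_k=\|u_k\|$ and $v_k=u_k/c_k$. The scaling monotonicity shows $v_k$ is itself a near-maximizer of unit norm, so the previous step gives $\int(-v_k)\geq c_0$ eventually. Using $f_m^\delta\geq \delta$ pointwise and $\phi(c_k)\leq 1$, the mean value theorem yields
\[
\epsilon_k\;\geq\;\mathcal{F}_{m,\delta,\eta}(v_k)-\mathcal{F}_{m,\delta,\eta}(u_k)\;\geq\;\delta\bigl(1-\phi(c_k)\bigr)\!\int_\Omega(-v_k)\;\geq\;\delta c_0\bigl(1-\phi(c_k)\bigr),
\]
so $\phi(c_k)\to 1$. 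The cases $c_k\to 0$ and $c_k\to \infty$ are excluded because along such subsequences $\phi(c_k)\to 0$, and expanding
\[
\mathcal{F}_{m,\delta,\eta}(u_k)=\sum_{j=n}^{m}\frac{\alpha^j}{j!}\phi(c_k)^{j\beta}\|v_k\|_{L^{j\beta}}^{j\beta}+\delta\phi(c_k)\!\int_\Omega(-v_k)
\]
together with the uniform Sobolev bounds on $\|v_k\|_{L^{j\beta}}$ forces $\mathcal{F}_{m,\delta,\eta}(u_k)\to 0$, a contradiction. Hence $\{c_k\}$ lies in a compact subinterval of $(0,\infty)$, and because $\phi$ attains the value $1$ only at $c=1$, every subsequential limit equals $1$, so $c_k\to 1$. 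This proves simultaneously that $\Theta^*(\epsilon)\to 1$ and $\Theta_*(\epsilon)\to 1$.

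The main obstacle is the non-concentration step: $\int F_m^\delta(-v)$ is naturally controlled from above by high $L^{j\beta}$-norms, while the mean-value estimate in the last paragraph requires a lower bound on the $L^1$-norm. Interpolating against the uniform Sobolev bound on the higher norms is precisely what reconciles the two ends of the scale, and is where the Sobolev inequality of Section 2 enters in an essential way; everything after that reduces to soft continuity arguments on the one-variable function $\phi$.
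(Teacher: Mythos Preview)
Your argument is correct, but it takes a considerably longer route than the paper's. The paper exploits the same function $\phi(c)=ce^{1-c}$ (written there as $g(t)$) but uses it \emph{multiplicatively}: since every term in $F_m^\delta(t)=\sum_{j=n}^m\frac{\alpha^j}{j!}t^{j\beta}+\delta t$ is homogeneous of degree at least $1$, one has $F_m^\delta(\phi(c)s)\le \phi(c)\,F_m^\delta(s)$ for $0<\phi(c)\le 1$ and $s\ge 0$. Integrating gives
\[
\F_{m,\delta,\eta}(u)\ \le\ \phi(\|u\|)\,\F_{m,\delta,\eta}\!\left(\frac{u}{\|u\|}\right)\ \le\ \phi(\|u\|)\,Y_{m,\delta,\eta},
\]
so any near-maximizer with $\F_{m,\delta,\eta}(u)\ge Y_{m,\delta,\eta}-\eps$ satisfies $\phi(\|u\|)\ge 1-\eps/Y_{m,\delta,\eta}$, and $\|u\|\to 1$ follows immediately from the properties of $\phi$. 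No non-concentration lemma, no mean-value estimate, and no Sobolev interpolation are needed.

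By contrast, you only use the qualitative consequence $\F_{m,\delta,\eta}(cv)\le \F_{m,\delta,\eta}(v)$ and then work to recover a quantitative gap via $f_m^\delta\ge\delta$ and the lower bound $\int(-v)\ge c_0$. That second ingredient forces you to prove a separate non-concentration step on the unit sphere using the Sobolev inequality and H\"older interpolation. Your approach is more robust---it would survive if $F_m^\delta$ contained sublinear terms, for instance---but here the super-homogeneity of $F_m^\delta$ makes the paper's three-line argument available. One minor remark: $F_m^\delta$ is not literally a polynomial in $t^\beta$ because of the $\delta t$ term, but your contradiction still goes through since $\delta\int(-v_k)\to 0$ directly.
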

\begin{proof}
Denote $g(t)=\frac{t}{\eta(t)}=te^{1-t}$. Then $g(t)<g(1)=1$ for any $t\neq 1$. For any $t>0$, $t\neq 1$ and any $u\in\mathcal{PSH}_0(\Omega)$ with $\|u\|=t$, we have, 
$$\F_{m,\delta,\eta}(u)=\int_{\Om}F_{m}^{\delta}\left(\frac{u}{\eta(\|u\|)}\right)=\int_{\Om}F_{m}^{\delta}\left(g(t)\frac{u}{\|u\|}\right).$$
By the Taylor expansion of $F_{m}^{\delta}$,  
$$\F_{m,\delta,\eta}(u)\le g(t)\int_{\Om}F_{m}^{\delta}\left(\frac{u}{\|u\|}\right).$$
Therefore, 
$$\sup\{\F_{m,\delta,\eta}(u)\,\big|\,u\in\mathcal{PSH}_0(\Omega),\,\|u\|=t\}\le g(t)Y_{m,\delta,\eta}.$$
Lemma \ref{norm} follows immediately. 
\end{proof}

Lemma \ref{norm} was first proved in \cite{TiW} for the $k$-Hessian equation.
Now it suffices to prove 
\beq\label{ec}
\int_{\Om}e^{\alpha(-u_{m,\delta})^{\beta}}\, d\mu\le C 
\eeq
for constant $C>0$,  uniformly bounded as $m\to \infty$ and $\delta\to 0$.
It  implies that $Y_{m,\delta,\eta}$ are uniformly bounded. 

We claim $g_{m,\delta}\in L^1 (\log L^1)^q$, i.e. $$\int_{\Om}g_{m,\delta}(\log (1+g_{m,\delta}))^q\,d\mu\le C.$$
Then \eqref{ec} follows from Theorem \ref{BMq}. 
To prove the claim, we denote for simplicity that $v=u_{m,\delta}$. 
By definition, 
\begin{eqnarray*}
	f^\delta_m(-v)&= & \alpha\beta\sum_{j=n}^m  \frac{\alpha^{j-1}}{(j-1)!} (-v)^{(j-1)\beta+\beta-1}+\delta\\
&\leq& \alpha \beta e^{\alpha(-v)^\beta}(-v)^{\beta-1}-\alpha\beta\sum_{j=0}^{n-2}\frac{\alpha^j}{j!}(-v)^{j\beta+\beta-1}+\delta.
\end{eqnarray*}
We may assume there is a subsequence $m_j\to \infty$ such that $\int_\Om(-v)f^\delta_{m_j}(-v)>1$ and thus $\lambda$ is bounded,
otherwise the proof has been finished. 
Noting that $$\log (1+g_{m,\delta}(-v))\le C+\alpha (-v)^{\beta}$$ and $\beta q=1$ on the subset $A:=\{-v>1\}$ and $f_m^\delta$ is bounded near boundary, we have
\begin{eqnarray*}
	\int_\Omega g_{m,\delta}(\log(1+g_{m,\delta}))^q &\le& \frac{\int_{A}f_m^\delta(-v)\cdot(C+\alpha(-v)^\beta)^{q}}{\int_\Om(-v)f_m^\delta(-v)}+\frac{\int_{\Om\setminus A} f_m^\delta(-v)\log (1+g_{m,\delta}(-v))}{\int_{\Om}(-v)f_m^\delta(-v)}\\ 
&\leq& C.
\end{eqnarray*}
This completes the proof. 
\hfill$\square$

\vskip 20pt

The Sobolev and Moser-Trudinger type inequalities for the $k$-Hessian equations were proved in \cite{W1, TiW}.
A natural question is whether one can extend those inequalities to the complex setting. 
In this paper we obtained these inequalities but we haven't obtained the optimal constants. 
A key technique used in \cite{W1, TiW} is the moving plane method, 
which implies the rotational symmetry of solutions to  the Dirichlet problem in the unit ball. 
But this technique does not apply to the complex Monge-Amp\`ere equation.

%A related open problem is the isoperimetric type inequality for the complex Monge-Amp\`ere equation.
%Let $\Om$ be a bounded pseudo-convex domain in $\Bbb C^n$. 
%The question is whether the following inequality holds, 
%\beq\label{Levi}
%|\Om|^{1/2n}\le C_n V_c(\Om)^{1/n} ,
%\eeq
%and the sharp constant $C_n$ is attained when $\Om=B_1(0)$.
%Here $|\Om|$ denotes  the Lebesgue measure of $\Om$, 
%$V_c(\Om)=\int_{\p\Om} \kappa dS,$
%and $\kappa$ is the Levi curvature of $\p\Om$. 

%Inequality \eqref{Levi} corresponds to the quermassintegral inequalities studied in \cite{GJ}. 

\begin{rem}
Note that by the stability result \cite{CP}, Theorem \ref{MT} also holds for bounded hyper-convex domains. 
Indeed, for any bounded hyper-convex domain $\Omega\subset \Bbb C^n$,
there exists a smooth pluri-subharmonic function $u$ vanishing on $\pom$. 
Denote $f=\det\{u_{i\bar{j}}\}$. 
Let $\Om_j\uparrow\Om$ be an increasingly pseudo-convex domains with smooth boundaries, 
and let $u_j \in C^{\infty}(\overline\Om)\bigcap \mathcal{PSH}_0(\Om)$ be the unique solution to $ \det(u_{i\bar{j}})=f$ in $\Om_j$.
By the stability result on $\Om$, $u_j$ converges uniformly to $u$ and the inequality holds by taking limits. 
\end{rem}

\vskip 10pt


\begin{thebibliography}{XXX}

\bibitem[Au]{Au}
Aubin, T.,
R\'eduction du cas positif de l'\'equation de Monge-Amp\`ere sur les vari\'et\'es k\"ahl\'eriennes compactes \`a la d\'emonstration d'une in\'egalit\'e(French) [Reducing the positive case of the Monge-Amp\`ere equation on compact K\"ahler manifolds to the proof of an inequality],
J. Funct. Anal. 57(1984), no. 2, 143-153. 


\bibitem[AZ]{AZ}
Ahag, P., Cegrell, U., Ko\l odziej, S., Pham, H. H. and Zeriahi, A., 
Partial pluricomplex energy and integrability exponents of pluri-subharmonic functions, 
Adv. Math. 222(2009), 2036-2058.


%\bibitem[AP]{AP}
%Aguilar, J. A. and Peral, I., 
%An a priori estimate for the $N$-laplacian, 
%C. R. Acad. Sci. Paris Ser. I, 319 (1994), 161-166.

%\bibitem [AHP]{AHP}
%Avelin B, Hed L, Persson H. 
%A note on the hyperconvexity of pseudo-convex domains beyond Lipschitz regularity. 
%Potential Analysis, 2015, 43(3): 531-545.


\bibitem[Be]{Be}
Berman, R.,
A thermodynamical formalism for Monge–Ampere equations, Moser-Trudinger inequalities and K\"ahler-Einstein metrics,
Adv. Math. 248(2013),1254-1297.

\bibitem[BB]{BB}
Berman, R. and Berndtsson, R., 
Moser-Trudinger type inequalities for complex Monge-Amp\`ere operators and Aubin’s ``hypoth\`ese fondamentale", arXiv:1109.1263.

%\bibitem[BB2]{BB2}
%Berman R. and Berndtsson, R., 
%Symmetrization of pluri-subharmonic and convex functions, 
%Indiana Univ. Math. Jour. 63(2014), 345-365.

%\bibitem[Be]{Be}
%Bedford, E., 
%Survey of pluri-potential theory, in Several Complex Variables, Proceedings of the Mittag-Leffler Institute, 1987-1988 (ed. Fornaess, J. E.), Princeton: Princeton Univ. Press, 1993.

\bibitem[B]{B}
B\l ocki, Z.,
Minicourse on pluripotential theory.

%\bibitem [B93]{B93}
%B\l ocki, Z.,
%Estimates for the complex Monge-Amp\`ere operator, 
%Bull. Pol. Acad. Sci. 41(1993), 151-157.

\bibitem [B98]{B98}
B\l ocki Z.,
The complex Monge-Amp\`ere operator in hyperconvex domains,
Annali della Scu. Norm. Sup. di Pisa-Classe di Scienze 23(1996), 721-747.

%\bibitem[B05]{B05}
%B\l ocki, Z.,
%On uniform estimate in Calabi-Yau theorem,
% Sci. China Ser. A 48(2005), 244-247.

\bibitem[BGZ]{BGZ}
Dinew S., Guedj V. and Zeriahi A.,
Open problems in pluripotential theory, 
Complex Variables and Elliptic Equations, 
61(2016), 902-930.

\bibitem[BM]{BM}
Brezis, H. and Merle, F.,
Uniform estimates and blow–up behavior for solutions of $\triangle u=V(x)e^u$ in two dimensions,
Comm. Part. Diff. Eqn. 16(1991), 1223-1253.

%\bibitem[BPV]{BPV}
%Boccardo, L., Peral, I. and Vazquez, J., 
%The $N$-laplacian elliptic equation: variational versus entropy solution, 
%Jour. Math. Anal. Appl. 201(1996), 671-688.

%\bibitem [BT1]{BT1} 
%Bedford, E. and Taylor, B. A.,
%The Dirichlet problem for a complex Monge-Amp\`ere equation,
%Invent. Math. 37(1976), 1-44.

%\bibitem [BT2]{BT2} 
%Bedford, E. and Taylor, B. A.,
%A new capacity for plurisubharmonic functions, 
%Acta Math. 149 (1982), 1-40. 

%\bibitem [C1]{C1}
%Cegrell U., 
%Capacities in complex analysis, 
%Aspects of Mathematics, Vieweg, 1988.

\bibitem[Ce]{Ce}
Cegrell, U.,
Measures of finite pluricomplex energy, 
Ann. Polon. Math. 123 (2019), no. 1, 203-213.


%\bibitem [C]{C}
%Cegrell U., 
%Pluricomplex energy, 
%Acta Math. 180(1998), 187-217.

\bibitem [CC] {CC}
Chen, X.X. and Cheng, J.R., 
On the constant scalar curvature K\"ahler metrics, apriori estimates, arXiv:1712.06697.


\bibitem [CKNS]{CKNS}
Caffarelli, L., Kohn, J., Nirenberg, L. and Spruck, J.,
The Dirichlet problem for nonlinear second-order elliptic equations,
II. Complex Monge-Amp\`ere, and uniformaly elliptic equations,
Comm. Pure Appl. Math. 38(1985), 209-252.

\bibitem [CP]{CP}
Cegrell, U. and Persson, L., 
The Dirichlet problem for the complex Monge-Amp\`ere operator: stability in $ L^2$, 
Michigan Math. J. 39(1992), 145-151.


\bibitem [CW]{CW} 
Chou, K.S. and Wang, X.-J., 
Variational theory for Hessian equations, 
Comm. Pure Appl. Math. 54(2001), 1029-1064.

%\bibitem [DP]{DP}
%Demailly J P and Pali N.,
%Degenerate complex Monge–Amp\`ere equations over compact K\"ahler manifolds. 
%Int. J. Math. 21(2010), 357-405.

%\bibitem [DZ]{DZ}
%Demailly, J. P.,  Dinew, S., Guedj, V., Hiep, P. H., Kołodziej, S. and Zeriahi, A.,
%H\"older continuous solutions to Monge-Amp\`ere equations,
%J. Eur. Math. Soc. 16(2014), 619-647.

%\bibitem [EGZ]{EGZ}
%Eyssidieux P., Guedj V. and Zeriahi A.,
%Singular K\"ahler-Einstein metrics,
%Jour. Amer. Math. Soc. 22(2009), 607-639.
         
%\bibitem [G]{G}
%Guan, B.,
%The Dirichlet problem for complex Monge-Amp\`ere equations and regularity of the pluri-complex Green function,
%Comm. Anal. Geom. 6(1998), 687-703.

\bibitem[GKY]{GKY}
Guedj, V., Kolev, B. and Yeganefar, N., 
K\"ahler-Einstein fillings, 
J. Lond. Math. Soc. (2) 88(2013), no. 3, 737-760.


%\bibitem [GKZ]{GKZ}
%Guedj, V., Ko\l odziej, S. and Zeriahi, A.,
%H\"older continuous solutions to Monge-Amp\`ere equations,
%Bull. London Math. Soc. 40(2008), 1070-1080.


%\bibitem [GT] {GT} D. Gilbarg, and N.S.   Trudinger, 
%               {\it Elliptic partial differential equations of second order,}
%               Springer-Verlag, New York, 1983.
      

%\bibitem [KR]{KR}
%Kerzman N, Rosay J P. 
%Fonctions plurisouscharmoniques d'exhaustion bornées et domaines taut. 
%Mathematische Annalen, 1981, 257(2): 171-184.


%\bibitem [I09]{I09}
%Ioku N., 
%Brezis-Merle type inequality for a weak solution to the $N$-Laplace equation in Lorentz-Zygmund spaces, 
%Diff. Inte. Eqns. 22(2009), 495-518.

\bibitem[HL]{HL}
Hou, Z.L. and  Li, Q.,
Energy functionals and complex Monge-Amp\`ere equations,
J. Inst. Math. Jussieu 9(2010), 463-467.

%\bibitem [K94]{K94}
%Kołodziej S.,
%The range of the complex Monge-Ampere operator,
%Indiana Univ. Math. Jour. 43(1994), 1321-1338.

%\bibitem [K96]{K96}
%Ko\l odziej, S.,
%Some sufficient conditions for solvability of the Dirichlet problem for the complex Monge-Amp\`ere operator, 
%Ann. Polon. Math. 65 (1996), 11-21. 


\bibitem [K98]{K98}
Ko\l odziej, S.,
The complex Monge-Amp\`ere equation, 
Acta Math. 180(1998), 69-117.


%\bibitem [K02]{K02}
%Ko\l odziej, S.,
%Equicontinuity of families of pluri-subharmonic functions with bounds on their Monge-Amp\`ere masses,
%Math. Z. 240(2002), 835-847.

%\bibitem[KO]{KO}
%Kufner, A. and Opic, B.,
%Hardy-Type Inequalities, Pitman Res. Notes Math. Vol. 219, 
%Longman Scientific and Technical, Harlow, 1990.

%\bibitem[KI]{KI}
%Kiselman, C. O., 
%Plurisubharmonic functions and potential theory in several complex varieties,
%in Development of mathematics 1950-2000, J.-P. Pier editor, Birkh\"auser, 2000.

%\bibitem[KL]{KL}
%Klimek, M., Pluripotential Theory, 
%Oxford University Press, New York, 1991.

\bibitem[L]{L}
 AIM problem lists, available at http://aimath.org/pastworkshops/mongeampereproblems.pdf.

%\bibitem[LT]{LT}
%La Nave, G. and Tian, G., 
%A continuity method to construct canonical metrics, 
%Math. Ann. 365(2016), 911-921.

\bibitem[PSS]{PSS}
Phong D. H., Song J. and Sturm J., 
Complex Monge Amp\`ere Equations, arXiv:1209.2203.

\bibitem[M]{M}
Moser, J.,
A sharp form of an inequality by N. Trudinger, 
Indiana Univ. Math. J. 20(1971), 1077-1092.

\bibitem [Ti] {Ti} 
Tian, G., 
On K\"ahler-Einstein metrics on certain K\"ahler manifolds with $C_1(M)>0$,
Invent. Math. 89(1987), 225-246.

\bibitem [TZ]{TZ}
Tian G. and Zhu X.H, 
A nonlinear inequality of Moser-Trudinger type,
Cal. Var. Part. Diff. Eqn. 10(2000), 349-354.

\bibitem[Tr]{Tr}
Trudinger, N.,
On imbeddings into Orlicz spaces and some applications, 
J. Math. Mech. 17 (1967) 473-483.

\bibitem[TiW]{TiW}
Tian, G.J. and Wang, X.-J.,
Moser-Trudinger type inequalities for the Hessian equation, 
J. Funct. Anal. 259(2010), 1974-2002.


\bibitem[WWZ]{WWZ}
 Wang, J.X., Wang, X-J. and Zhou, B.,
 A priori estimate for the complex Monge-Amp\`ere equation,
preprint.

\bibitem[W1]{W1}
Wang, X.-J.,
A class of fully nonlinear elliptic equations and related functionals,
Indiana Univ. Math. J. 43(1994), 25-54.

\bibitem[W2]{W2}
Wang, X.-J.,
The k-Hessian Equation, 
Lecture Notes in Math., Vol. 1977, Springer, 2009.



%\bibitem [Z]{Z}
%Zhou, B., 
%On uniform estimate of the complex Monge-Amp\`ere equations, 
%Progress in Mathematics, 2(2018), 106-110.

%\bibitem [Zhou]{Zhou}
%Zhou B., 
%The Sobolev inequality for complex Hessian equations, 
%Mathematische Zeitschrift, 2013, 274(1-2): 531-549.

\end{thebibliography}
\end{document}